\def\bu{\bullet}
\def\marker{\>\hbox{${\vcenter{\vbox{
    \hrule height 0.4pt\hbox{\vrule width 0.4pt height 6pt
    \kern6pt\vrule width 0.4pt}\hrule height 0.4pt}}}$}\>}
\def\gpic#1{#1
%     \midinsert\centerline{\box\graph}\endinsert }
     \smallskip\par\noindent{\centerline{\box\graph}} \medskip}
\newtheorem{theorem}{Theorem}[section]
\newtheorem{conjecture}[theorem]{Conjecture}
\newtheorem{lemma}[theorem]{Lemma}
\def\VEC#1#2#3{#1_{#2},\ldots,#1_{#3}}
\def\CH#1#2{\binom{#1}{#2}}
\def\FR#1#2{\frac{#1}{#2}}
\def\FL#1{\left\lfloor{#1}\right\rfloor}
\def\NN{{\mathbb N}}  
\def\la{\langle}
\def\ra{\rangle}
\def\cF{{\mathcal{F}}}
\def\cL{{\mathcal{L}}}
\def\cM{{\mathcal{M}}}
\def\cR{{\mathcal{R}}}
\def\esub{\subseteq}
\def\eps{\varepsilon}
\begin{document}

\title{Lichiardopol's conjecture on disjoint cycles in tournaments}

\author{
Fuhong Ma\thanks{School of Mathematics, Shandong University, Jinan 250100,
China: \texttt{mafuhongsdnu@163.com}.}\,,
Douglas B. West\thanks{Departments of Mathematics, Zhejiang Normal University,
Jinhua 321004 China, and University of Illinois at Urbana--Champaign, Urbana IL
61801 USA:
\texttt{dwest@math.uiuc.edu}.  Research supported by National Natural Science
Foundation of China grants NNSFC 11871439 and 11971439.}\,,
Jin Yan\thanks{School of Mathematics, Shandong University, Jinan 250100, China:
\texttt{yanj@sdu.edu.cn}, Corresponding author.  Research supported by National
Natural Science Foundation of China grants NNSFC 11671232, 11271230).}
}

\date{\today}
\maketitle

\baselineskip 16pt

\begin{abstract}
In 2010, N. Lichiardopol conjectured for $q \geq 3$ and $k \geq 1$ that any
tournament with minimum out-degree at least $(q-1)k-1$ contains $k$ disjoint
cycles of length $q$.  We prove this conjecture for $q \geq 5$.  Since it is
already known to hold for $q\le4$, this completes the proof of the conjecture.
\vspace{3mm}

\noindent{\bf Keywords}: Tournaments; Minimum out-degree; Disjoint cycles

\noindent{\bf AMS Subject Classification}: 05C70$,$ 05C38
\end{abstract}

\section{Introduction}

We consider cycles in digraphs (directed graphs); a cycle is a strongly
connected digraph in which every vertex has indegree $1$ and outdegree $1$.
The {\it length} of a cycle is the number of edges;
a \emph{$q$-cycle} is a cycle of length $q$.  By ``$k$ disjoint cycles''
we always mean $k$ pairwise vertex disjoint cycles.  A \emph{tournament} is a
digraph obtained from  a complete graph by assigning a direction to each edge.

A famous conjecture of Bermond and Thomassen \cite{J.C.Bermond} for arbitrary
digraphs asserts that large minimum outdegree guarantees many disjoint cycles.

\begin{conjecture}[\rm\cite{J.C.Bermond}]\label{ConjC}
If a digraph $D$ has minimum outdegree at least $2k-1$, then $D$ contains
$k$ disjoint cycles.
\end{conjecture}

This conjecture is trivial for $k=1$.  It was proved for $k=2$ by
Thomassen~\cite{C.Thomassen} and for $k=3$ by Lichiardopol, P\'or, and
Sereni~\cite{N.Lichiardopol1}.  Bang-Jensen, Bessy, and
Thomass\'e~\cite{Jorgen} proved it for the special case of tournaments.
Instead of considering special values of $k$ or special classes of digraphs,
one can also seek to reduce the minimum outdegree needed to guarantee $k$
disjoint cycles.  Alon~\cite{Alon} showed that $64k$ suffices, and later
Buci\'c~\cite{Bucic} reduced this to $18k$.

For general digraphs, Conjecture \ref{ConjC} remains open.  In the special case
of tournaments, stronger results are possible.
Lichiardopol~\cite{N.Lichiardopol2} conjectured that with large minimum
outdegree, one can control not only the number of disjoint cycles but also
their length.

\begin{conjecture}[\rm\cite{N.Lichiardopol2}]\label{ConjB}
If $q \geq 3$ and $k \geq 1$, then every tournament with minimum outdegree
at least $(q-1)k-1$ contains $k$ disjoint $q$-cycles.
\end{conjecture}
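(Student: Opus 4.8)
The plan is to argue by induction on $k$, equivalently to study a vertex-minimum counterexample $T$ with $\delta^+(T)\ge (q-1)k-1$ that has no $k$ disjoint $q$-cycles. Two reductions set the stage. First I would reduce to strongly connected tournaments: if $S$ is the terminal (sink) strong component of $T$, then every $v\in S$ satisfies $d^+_T(v)=d^+_S(v)\le |S|-1$, so $|S|\ge (q-1)k$ and $\delta^+(T[S])\ge (q-1)k-1$; since $k$ disjoint $q$-cycles inside the strong tournament $T[S]$ would suffice, we may assume $T$ is strong. Second, the base case $k=1$ asks only for a single $q$-cycle under $\delta^+\ge q-2$. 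Here strongness forces $|T|\ge q-1$ from $d^+\le |T|-1$; the value $|T|=q-1$ is impossible, since a tournament on $q-1$ vertices has out-degree sum $\binom{q-1}{2}<(q-1)(q-2)$, so some vertex would fall below out-degree $q-2$. Hence $|T|\ge q$, and Moon's theorem supplies a $q$-cycle.

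For the inductive step ($k\ge 2$) I would fix a maximum family $\mathcal C=\{C_1,\dots,C_t\}$ of disjoint $q$-cycles and suppose for contradiction that $t\le k-1$. Let $R=V(T)\setminus\bigcup_i V(C_i)$; by maximality $T[R]$ is $q$-cycle-free, so by Moon's theorem every strong component of $T[R]$ has at most $q-1$ vertices. A counting argument shows $R\ne\emptyset$: otherwise $n=tq\le (k-1)q$, contradicting $n\ge 2\delta^+(T)+1\ge 2(q-1)k-1$. The engine of the proof is then the terminal strong component $S^R$ of $T[R]$: any $v\in S^R$ has all of its $R$-out-neighbors inside $S^R$, so $d^+_{T[R]}(v)\le q-2$, whence $v$ sends at least $(q-1)k-1-(q-2)=(q-1)(k-1)$ arcs into $\bigcup_i V(C_i)$. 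Since there are at most $k-1$ cycles, each of size $q$, the pigeonhole principle yields a cycle $C_i$ with $|N^+(v)\cap V(C_i)|\ge q-1$.

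From such a heavily dominated cycle I would build an augmenting exchange. If $v$ has an in-neighbor on $C_i$, then traversing $C_i$ some arc runs from an in-neighbor of $v$ to an out-neighbor of $v$; inserting $v$ there and deleting the intermediate vertex $x$ produces a new $q$-cycle on $(V(C_i)\setminus\{x\})\cup\{v\}$ and releases $x$ into $R$. The aim is to iterate such swaps so that the released cycle-vertices, together with the many high-out-degree vertices of $R$, can be assembled into one additional $q$-cycle disjoint from the rest, contradicting maximality of $\mathcal C$. Concretely I would track the bipartite domination between $R$ and $\bigcup_i V(C_i)$ and run a deficiency/discharging argument: either enough vertices of $R$ can be rotated onto the cycles to liberate a full $q$-cycle's worth of mutually compatible vertices, or the domination counts collide with $\delta^+(T)\ge (q-1)k-1$ to give a direct numerical contradiction. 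The anchoring case $q=3$ is exactly the theorem of Bang-Jensen, Bessy and Thomassé quoted above, which the exchange argument must be engineered to meet.

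The hard part is precisely this augmentation step and the bookkeeping that makes it work uniformly in $q$. The delicacy is that a single insertion of $v$ only swaps one $R$-vertex for one cycle-vertex and does not by itself increase $t$; extracting a genuinely new $q$-cycle requires coordinating several rotations while guaranteeing that the freed vertices induce a strong subtournament of order at least $q$ (so Moon applies) and stay disjoint from the cycles one keeps. Controlling these interactions is where the slack in the hypothesis $\delta^+\ge (q-1)k-1$ is spent, and doing so without waste down to the smallest admissible values of $q$ is the central obstacle. That the threshold cannot be lowered is confirmed by the extremal tournaments achieving $\delta^+=(q-1)k-2$ with only $k-1$ disjoint $q$-cycles, so any successful argument must be essentially tight.
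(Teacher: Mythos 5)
There is a genuine gap, and it is exactly where you flag it yourself: the augmentation step is announced, not executed. Your setup is fine as far as it goes --- the reduction to strong tournaments via the sink strong component, the base case $k=1$ via Moon's theorem, and the pigeonhole giving a vertex $v$ of the terminal strong component of $T[R]$ with at least $q-1$ out-neighbours on a single cycle $C_i$ are all correct. But from that point on you only have a single-insertion exchange, which (as you concede) swaps one vertex of $R$ for one vertex of $C_i$ and leaves the number of cycles $t$ unchanged. To contradict maximality you must liberate $q$ mutually compatible vertices inducing a strong subtournament disjoint from the $t$ cycles you keep, and the proposal offers no invariant, no potential function, and no termination argument showing the rotations ever converge to such a configuration; ``either enough vertices can be rotated \dots or the domination counts collide'' is a hope, not a lemma. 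Note also that the statement you are proving is an open conjecture: the paper itself only establishes it for $q\geq 11$ (and for $q\geq 9$ when $k\leq q+1$), so a short uniform argument for all $q\geq 3$ should be met with suspicion. Your final claim that extremal tournaments with $\delta^+=(q-1)k-2$ and only $k-1$ disjoint $q$-cycles ``confirm'' tightness is likewise asserted without a construction, and no such construction appears in the paper.

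It is instructive to compare with how the paper spends the slack in $\delta^+\geq (q-1)k-1$. Your pigeonhole step mirrors the opening of the paper's proof of Theorem~\ref{THA} ($d^+(v_1,\Gamma)\geq (q-1)(k-1)+1$), but the paper then abandons local exchanges in favour of a global counting theorem (Theorem~\ref{ThC}): it partitions the Hamilton path of the $q$-cycle-free remainder into blocks $U_1$, $S$, $U_2$, uses K\"onig's lemma to convert arc counts into matchings (Claims~\ref{ClaimA}--\ref{ClaimE}), classifies the cycles of $\mathcal{F}$ into $\mathcal{I}$, $\mathcal{O}$, $\mathcal{R}$, and derives a quadratic inequality whose discriminant forces $k<3.3\sqrt{q}$ --- i.e.\ this style of argument provably only closes the case $k\geq 3.3\sqrt q$. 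The complementary regime $k\leq q+1$ requires a separate and lengthy mechanism (the proof of Theorem~\ref{THB}): a potential function maximizing the length of the terminal cycle $C_L$ on the Hamilton path, together with Facts~1--4 extracting $(q-1)$- and $(q-2)$-subcycles whose leftover vertices have small out-degree, so that each exchange strictly lengthens $C_L$ and progress is guaranteed. Your plan contains no analogue of either mechanism, and the two regimes only overlap when $q\geq 11$, which is why even the paper falls short of the full conjecture. To salvage your approach you would need, at minimum, a monotone quantity that your rotations strictly increase, playing the role the paper's $|C_L|$ plays.
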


In a tournament, disjoint cycles of any length lead to disjoint $3$-cycles
by using chords.  Hence the result of~\cite{Jorgen} yields the special case
of Conjecture~\ref{ConjB} for $q=3$.

\begin{theorem}[\rm\cite{Jorgen}]\label{ThA}
Every tournament $T$ with minimum outdegree at least $2k-1$ has $k$ disjoint
$3$-cycles.
\end{theorem}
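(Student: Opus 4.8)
The natural strategy is induction on $k$. For the base case $k=1$ the hypothesis $\delta^+(T)\ge 1$ rules out a transitive tournament (which would have a sink of out-degree $0$), and since a tournament contains a $3$-cycle whenever it is not transitive, $T$ has a $3$-cycle. For the inductive step the plan is to locate a single $3$-cycle $C$ whose deletion still leaves enough out-degree, namely $\delta^+(T-V(C))\ge 2(k-1)-1=2k-3$, so that the induction hypothesis supplies $k-1$ further disjoint $3$-cycles inside $T-V(C)$ and $C$ completes the packing.

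The whole difficulty is concentrated in this reduction, and it stems from a tight counting margin. Deleting the three vertices of $C$ can lower the out-degree of a surviving vertex $u$ by as much as $3$, whereas we can only afford a loss of $2$ at the vertices realising the minimum. Concretely, a vertex $u\notin V(C)$ with $d^+_T(u)\ge 2k$ is always safe, while a vertex with $d^+_T(u)=2k-1$ survives with the required out-degree precisely when it does \emph{not} dominate $C$ (that is, when some vertex of $C$ beats $u$). Hence the inductive step reduces to the purely structural task: find a $3$-cycle $C$ that is dominated by no vertex of out-degree exactly $2k-1$.

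To produce such a cycle I would anchor the search at a vertex $v$ of minimum out-degree and study $O=N^+(v)$, a set of exactly $2k-1\ge 3$ vertices on which $v$ uniformly dominates; a $3$-cycle is readily extracted from $T[O]$ when it is not transitive, and otherwise from $O$ together with an in-neighbour of $v$, via Moon/Camion type facts. The point is to pick one no critical vertex dominates, and here it helps that a vertex dominating a triangle lying inside $O$ must beat all three of its vertices, which tends to force a large out-degree and rule out most would-be dominators. An equivalent and often cleaner viewpoint is to take a maximum packing of disjoint $3$-cycles, assume it has only $k-1$ members, and use the sink $u$ of the (necessarily transitive) leftover set: since $u$ has $\ge 2k-1$ out-neighbours distributed among the $\le 3(k-1)$ packed vertices, a pigeonhole count forces one packed triangle to be completely dominated by $u$, while the bound $\delta^+\le (n-1)/2$ gives $n\ge 4k-1$ and so guarantees the leftover set is nonempty.

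I expect the real obstacle to be the endgame once a dominated triangle has been isolated: converting ``$u$ dominates the triangle $(abc)$'' into an actual increase of the packing. The immediate swaps fail because a vertex beating all of $a,b,c$ cannot itself lie on a $3$-cycle using two of them, so one is forced to rearrange several triangles simultaneously, to exploit arcs running from $N^+(u)$ back into the leftover set, or to invoke a median order so as to spread the chosen cycle and prevent any minimum-out-degree vertex from beating all three of its vertices. Disposing of the resulting configurations appears to demand a careful case analysis, and that is where I would expect the genuine work — and the sharpness of the threshold $2k-1$ — to reside.
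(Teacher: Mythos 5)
This statement is quoted in the paper from the reference \cite{Jorgen} (Bang-Jensen, Bessy and Thomass\'e) and the paper supplies no proof of its own, so the only question is whether your argument stands on its own. It does not: there is a genuine gap at exactly the point you flag yourself. Your reduction of the inductive step is correct as far as it goes --- deleting a $3$-cycle $C$ preserves $\delta^+\ge 2k-3$ unless some vertex of out-degree exactly $2k-1$ dominates all of $V(C)$, and your pigeonhole argument (sink $u$ of the transitive leftover has $\ge 2k-1$ out-neighbours among $\le 3(k-1)$ packed vertices, so some packed triangle is fully dominated by $u$) is sound. But the proof then requires you to convert that dominated triangle into a larger packing, or equivalently to exhibit a $3$-cycle that no critical vertex dominates, and you explicitly defer this: ``that is where I would expect the genuine work \ldots to reside.'' Since $u$ beats all of $a,b,c$ it cannot replace any one of them, so no local swap works, and the naive induction on $k$ with the bare hypothesis $\delta^+\ge 2k-1$ is known not to close without substantial additional machinery. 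In the actual proof of \cite{Jorgen} this is where almost all of the work lies: the authors do not run the simple induction you describe but prove a stronger, more structured statement (controlling how a maximum family of disjoint triangles interacts with the remainder, in the spirit of Theorem~\ref{ThC} of the present paper) and carry out a lengthy case analysis to augment the packing. Your submission is therefore a correct identification of the proof strategy and of its central obstruction, but not a proof; the inductive step is asserted, not established.
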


Bessy, Lichiardopol, and Sereni~\cite{BLS} had earlier proved that every
tournament with minimum indegree and outdegree both at least $2k-1$ has $k$
disjoint cycles.  In support of Conjecture~\ref{ConjB},
Lichiardopol~\cite{N.Lichiardopol2} proved the two weaker theorems below, the
first of which improves the result of~\cite{BLS}.

\begin{theorem}[\rm\cite{N.Lichiardopol2}]\label{Tha}
If $q \geq 3$ and $k \geq 1$, then every tournament with minimum outdegree and
indegree both at least $(q-1)k-1$ contains $k$ disjoint $q$-cycles.
\end{theorem}

Ma and Yan~\cite{yan} improved Theorem~\ref{Tha} by guaranteeing more than
$k$ disjoint cycles under the same conditions, so the conclusion of
Theorem~\ref{Tha} is not sharp (\cite{Ma} addressed the special case of
$4$-cycles in regular tournaments).

\begin{theorem}[\rm\cite{N.Lichiardopol2}]\label{CORO}
If $q \geq 3$ and $k \geq 1$, then every tournament with minimum outdegree at
least $(q-1)k-1$ contains $\lceil k-1-\frac{k-2}{q} \rceil$ disjoint $q$-cycles.
\end{theorem}

The case $q=4$ of Conjecture~\ref{ConjB} was proved in the masters thesis
of S. Zhu~\cite{Zhu}.  Hence our result in this paper completes the proof of
Conjecture~\ref{ConjB}.  Using some of our lemmas and similar methods,
Wang, Ma, and Yan~\cite{WMY} gave an independent proof of the cases $q\le9$.
Our result for $q\ge5$ is self-contained and does not use any of their
arguments.

\begin{theorem}\label{ThB}
For $q \geq 5$ and $k \geq 1$, every tournament with minimum outdegree at
least $(q-1)k-1$ contains $k$ disjoint $q$-cycles.
\end{theorem}

For Conjecture~\ref{ConjB} (and Theorem~\ref{ThB}), the degree hypothesis is
not known to be sharp.  A trivial lower bound on the minimum outdegree needed
to guarantee the conclusion is $qk/2$, since a tournament on $qk-1$ vertices in
which every vertex has outdegree $qk/2-1$ does not have enough vertices to have
$k$ disjoint $q$-cycles.  Given that the needed inequalities are easier to
satisfy when $q$ is large, we ask whether there is a positive constant $\eps$
such that minimum outdegree $(1-\eps)qk$ suffices when $q$ is sufficiently
large.

Motivated by this problem on tournaments, one may wonder whether large
outdegree can guarantee disjoint cycles of the same length in general digraphs,
even without constraining which length it is.  Our result guarantees this
for tournaments.  Thomassen~\cite{C.Thomassen} conjectured such a relationship
for general digraphs, but Alon~\cite{Alon} showed that it cannot hold.

\begin{theorem}[\rm\cite{Alon}]\label{PropA}
For all $r\in\NN$, some digraph with minimum outdegree $r$ has no two
edge-disjoint cycles of the same length (and hence also no disjoint
cycles of the same length).
\end{theorem}

\medskip
Other papers on disjoint cycles in digraphs include \cite{BAI} and \cite{CHEN}.  %, zou}.
%Before proceding, we note that
Disjoint cycles have also been studied in
%Similar problems have been studied for
undirected graphs, where the results are more plentiful.  The
Bermond--Thomassen Conjecture is in fact the directed
analogue of the Corr\'adi--Hajnal Theorem~\cite{CorHaj}.

\begin{theorem}[\rm\cite{CorHaj}]
For $n,k\in\NN$ with $n\ge 3k$, every $n$-vertex undirected graph with
minimum degree at least $2k$ contains $k$ disjoint cycles.
\end{theorem}

There are many extensions and variations on this result.  Those most similar
to our work consider the lengths of disjoint cycles guaranteed by a threshold
on the minimum degree.  Let $\delta(G)$ denote the minimum degree of a graph
$G$; also, the {\it order} of a graph or digraph is the number of vertices.

Thomassen~\cite{Thom2} proved for $k\ge2$ that every graph $G$ with
$\delta(G)\ge 3k+1$ and order at least some constant $c_k$ contains $k$
disjoint cycles of the same length.  Thomassen conjectured that minimum degree
$2k$ suffices, which had earlier been conjectured for $k=2$ by H\"aggkvist.
Egawa~\cite{Egawa} proved Thomassen's conjecture for $k\ge3$ with a threshold
of $|V(G)|\ge 17k+o(k)$.  Verstra\"ete~\cite{Verstraete} later proved
Thomassen's conjecture in full.  Verstra\"ete also conjectured that order at
least $4k$ is enough to guarantee $k$ disjoint cycles of the same length
when $\delta(G)\ge2k$.

Chiba, Fujita, Kawarabayashi, and Sakuma~\cite{Chiba} guaranteed for $k\in\NN$
a constant $c_k$ such that every graph with order at least $c_k$ and minimum
degree at least $2k$ contains $k$ disjoint even cycles, with special exceptions.
Other degree conditions for disjoint cycles in undirected graphs can be found
in~\cite{BHLLL} and in the survey~\cite{ChiYam}.

\section{Structure of the Proof}

To prove Theorem~\ref{ThB}, we prove a theorem that was mainly inspired by
the proof of the Bermond--Thomassen Conjecture for tournaments by
Bang-Jensen, Bessy, and Thomass\'e \cite{Jorgen}.
\looseness -1

\begin{theorem}\label{ThC}
Fix $k,q\in\NN$ with $q\ge5$ and $k\ge2$, and let $T$ be a tournament with
minimum outdegree at least $(q-1)k-1$.  For any family ${\cF}$ of $k-1$
disjoint $q$-cycles in $T$, there is a family of $k$ disjoint $q$-cycles in $T$
using at most $3q-6$ vertices outside the cycles in ${\cF}$.
\end{theorem}

We show first that this suffices.  For a path with vertices $\VEC v1q$ in
order, we use the notation $\la \VEC v1q\ra$ (each edge in $\la \VEC v1q\ra$
is oriented from $v_i$ to $v_{i+1}$); we also call this a {\it $v_1,v_q$-path}.
For a cycle with vertices $\VEC v1q$ in order, we use the notation $[\VEC v1q]$.
The outdegree of a vertex $v$ is $d^+(v)$, and the minimum outdegree in a
digraph $D$ is $\delta^+(D)$.

It is well-known that every tournament contains a spanning path (R\'edei's
Theorem~\cite{Redei}), and that a tournament is strongly connected if and only
if it contains a spanning cycle (Camion's Theorem~\cite{Camion}), where a
digraph is {\it strongly connected} or {\it strong} if it contains a $u,v$-path
for any two vertices $u$ and $v$.  Moreover, a strong touranment
(or the subtournament induced by any cycle) is {\it pancyclic}, meaning that
it contains cycles of all lengths from $3$ through through the number of
vertices.  When invoking this, we say ``by pancyclicity''.
Finally, Moon~\cite{Moon} showed that every strong tournament with
at least three vertices is {\it vertex pancyclic}, meaning that through any
vertex there are cycles of all lengths.

\begin{lemma}
Theorem~\ref{ThC} implies Theorem \ref{ThB}.
\end{lemma}
\begin{proof}
Assuming that Theorem~\ref{ThC} holds, we prove Theorem~\ref{ThB} by induction
on $k$.  When $k = 1$, we are given a tournament $T$ with $\delta^+(T)\ge q-2$.
Since every tournament has a spanning path, we may let $\la \VEC v1n\ra$ be a
spanning path of $T$.  Since $d^+(v_n)\ge q-2$ and $v_{n-1}$ is not an
outneighbor of $v_n$, the edge to the earliest outneighbor of $v_n$ along $P$
completes a cycle of length at least $q$ in $T$.  By pancyclicity, $T$
contains a $q$-cycle.

For the induction step, suppose $k>1$.  We have
$\delta^+(T) \geq (q-1)k-1>(q-1)(k-1)-1$.  By the induction hypothesis,
$T$ contains $k-1$ disjoint $q$-cycles.  From this family ${\cF}$
of $k-1$ disjoint $q$-cycles, Theorem \ref{ThC} produces a family of $k$
disjoint $q$-cycles.
\end{proof}

We will prove Theorem~\ref{ThC} by considering two cases.  In
Theorem~\ref{THMA}, we will prove that the conclusion holds when $q\ge5$ and
$k\le q$.  In Theorem~\ref{THMB}, we will prove by induction on $k$ that the
conclusion holds when $q\ge5$ and $k>q$, using Theorem~\ref{THMA} as a basis.
The restriction to using $3q-6$ vertices outside ${\cF}$ will be helpful in
proving Theorem~\ref{THMB}.

\section{Cycles and Paths in Tournaments}

Here we prove some structural lemmas about tournaments that will be useful
in the proof of Theorem \ref{THMA}.  We use $T[S]$ to denote the subtournament
of $T$ induced by the vertex set $S$.  We also let $d^+(X,Y)$ denote the
number of edges with tail in $X$ and head in $Y$, where $X$ and $Y$ may be
a set of vertices or a single vertex.  When $uv$ is an edge, we say that
$v$ is a {\it successor} of $u$ and $u$ is a {\it predecessor} of $v$.

\begin{lemma}\label{lem1}
If $C$ is a cycle of length $m$ in a tournament $T$, where $m\ge4$, then
$T[V(C)]$ contains a cycle $C'$ of length $m-1$ such that the omitted vertex
$u$ of $C$ has at least two predecessors in $V(C')$.
%Let $C$ be a cycle of length $m$ in a tournament $T$.  If $m\ge 2r$, where
%$r\in\{2,3\}$, then $T[V(C)]$ contains a cycle $C'$ of length $m-1$ such that
%the omitted vertex $u$ of $C$ has at least $r$ predecessors in $C'$.
\end{lemma}

\vspace{-.7pc}
\begin{proof}
Since every strong tournament is pancyclic,
$T[V(C)]$ contains a cycle $C^*$ of length $m-1$.  Let $u'$ be the vertex of
$C$ not in $C^*$, and let $v$ be the predecessor of $u'$ on $C$.  If $u'$ has
another predecessor on $C^*$, then let $u=u'$ and $C'=C^*$.  Otherwise, let
$\la v,w,x\ra$ be the portion of $C^*$ leaving $v$ (this exists since $m\ge4$).
Since $u'$ has only one predecessor in $V(C)$, both $u'w$ and $u'x$ are edges.
Now form $C'$ by replacing $w$ with $u'$ in $C^*$, and let $u=w$.  The omitted
vertex $u$ now has predecessors $v$ and $u'$ on $C'$.
\end{proof}

We believe in general that for $m\ge2r$, a strong tournament with $m$ vertices
contains a cycle of length $m-1$ such that the omitted vertex has at least $r$
predecessors in $C'$.  We proved this for $r=3$, but the proof is considerably
longer than for $r=2$, and we only need the result for $r=2$.  The threshold on
$m$ is sharp; a tournament on $2r-1$ vertices in which every vertex has $r-1$
predecessors and $r-1$ successors has no vertex with $r$ predecessors.

\begin{lemma}\label{lem2}
If $C$ is a cycle of length $m$ in a tournament $T$, where $m\ge5$, then
$T[V(C)]$ contains a cycle $C''$ of length $m-2$ omitting vertices $x$ and $y$
with $xy\in E(T)$ such that $y$ has at least one predecessor in $V(C'')$.
%If $C$ is a cycle of length $m$ in a tournament $T$, where $m\ge6$, then
%$T[V(C)]$ contains a cycle $C''$ of length $m-2$ omitting vertices $x$ and $y$
%with $xy\in E(T)$ such that $y$ has at least two predecessors in $V(C'')$.
%If $m=5$, then such $C''$ and $y$ exist with $y$ having at least one
%predecessor in $V(C'')$.
\end{lemma}

\vspace{-.7pc}
\begin{proof}
We use Lemma~\ref{lem1} twice.  First, it gives us a cycle
$C'$ in $T[V(C)]$, with length $m-1$, such that the remaining vertex $u$ has at
least two predecessors in $V(C')$.  We then apply Lemma~\ref{lem1} using
this $C'$ as $C$; it gives us a cycle $C''$ in $T[V(C')]$, with length $m-2$,
such that the remaining vertex $v$ has at least two predecessors in $V(C'')$.
From $C$, the vertices $u$ and $v$ have been omitted.  Each has at least
one predecessor in $V(C'')$.  Hence we let $x$ be the tail and $y$ be the
head of the edge joining $u$ and $v$.
%Suppose $m\ge6$.  We use Lemma~\ref{lem1} twice.  First, it gives us a cycle
%$C'$ in $T[V(C)]$, with length $m-1$, such that the remaining vertex $u$ has at
%least three predecessors in $V(C')$.  We then apply Lemma~\ref{lem1} using
%this $C'$ as $C$; it gives us a cycle $C''$ in $T[V(C')]$, with length $m-2$,
%such that the remaining vertex $v$ has at least two predecessors in $V(C'')$.
%From $C$, the vertices $u$ and $v$ have been omitted.  Each has at least
%two predecessors in $V(C'')$.  Hence we let $x$ be the tail and $y$ be the
%head of the edge joining $u$ and $v$.
%
%If $m=5$ and applying Lemma~\ref{lem1} leaves $u$ with at least three
%predecessors in $V(C')$, then we proceed as before.  Hence we may assume that
%$u$ has exactly two predecessors on $C'$.  Let $C'=[a,b,c,d]$ with the
%predecessors of $y$ on $C'$ being $a$ and either $b$ or $c$.  If $bd\in E(T)$,
%then let $C''=[a,b,d]$; both $c$ and $u$ have a predecessor on $C''$.  If
%$db\in E(T)$, then let $C''=[b,c,d]$; now both $a$ and $u$ have a predecessor
%on $C''$.
\end{proof}

%When $m=5$, we cannot guarantee a $3$-cycle such that the head of the edge
%joining the omitted vertices has at least two predecessors on the cycle.
%There are two $5$-vertex tournaments (the unique tournament where every vertex
%has indegree and outdegree $2$, and one strong tournament having one vertex of
%outdegree $3$) for which no such $3$-cycle exists.

We say that a vertex set $X$ {\it dominates} a vertex set $Y$ in a tournament
if every edge joining $X$ and $Y$ is oriented from $X$ to $Y$.

\begin{lemma}\label{lem3}
If $C$ is a cycle of length $m$ in a tournament $T$, where $m\ge4$,
and $S$ is a $3$-set in $V(C)$, then at least one vertex of $S$ has at
least two predecessors in $V(C)$.
\end{lemma}

\vspace{-.7pc}
\begin{proof}
If the claim fails, then $T[S]$ is a $3$-cycle.  Now failure of the claim
requires $S$ to dominate $V(C)-S$, contradicting that $C$ is a cycle.
\end{proof}

\begin{lemma}\label{lem4}
Let $T$ be a tournament with minimum outdegree at least $(q-1)k-1$, where
$2\le k\le q$ and $q\ge5$.  Let ${\cF}$ be a family of $k-1$ disjoint
$q$-cycles in $T$, with vertex sets $\VEC V1{k-1}$ inducing cycles
$\VEC C1{k-1}$, and let
$P$ be a path through the remaining vertices.
%If $T[V(P)]$ has no $q$-cycle, and
If $S_1$ and $S_2$ partition $V(P)$ with $S_2$ dominating $S_1$ in $T$
and $|S_1|\le q$, then
% the following claims hold.

%(a) If $|S_1|\le q$, then
(a) $d^+(V^*,S_1) \le \frac{|S_1|+1}{2q-2}|S_1|$
for some $V^* \in \{\VEC V1{k-1}\}$.

(b) If $|S_1|\ge 2$, then $d^+(S_1,z)\ge 2$ for any $z \in V^*$.
%while $d^+(S_1,z)\ge1$ if $|S_1|\le 3$.

(c) If $|S_1|\le q-1$, then $d^+(u,V^*) \geq 3$ for any $u\in S_1$.

%(d) If $|S_1|\le q-1$, then at least five vertices in $V^*$ have
%no successors in $S_1$.

(d) If $S_1$ dominates $z \in V^*$, and $z$ has at least $r$ predecessors
in $V^*$, then $d^+(z,S_2) \geq r$.
%(d) If $z \in V^*$ is dominated by $S_1$ and has $r$ predecessors
%in $V^*$, then $d^+(z,S_2) \geq r-1$.
%
%\qquad
%$r\in\{3,4\}$.
\end{lemma}

\begin{proof}
{\it of (a):}
Since $S_2$ dominates $S_1$,
$$
d^+(S_1,\bigcup V_i) ~\ge~[(q\!-\!1)k-1]|S_1|-\frac{|S_1|(|S_1|\!-\!1)}{2}
~=~ \left[(q\!-\!1)(k\!-\!1)+\frac{2q\!-\!|S_1|\!-\!3}{2}\right]|S_1|.
$$
By the pigeonhole principle, there exists $C_i \in {\cF}$ such that
$d^+(S_1,V_i)\ge (q-1+\frac{2q-|S_1|-3}{2(k-1)})|S_1|$.
Since $|S_1|\le q$ and $q\ge 3$, we have $2q-|S_1|-3\ge 0$.
We then use $k\le q$ to conclude $d^+(S_1,V_i)\ge(q-\frac{|S_1|+1}{2q-2})|S_1|$.
Let $C^*$ be this cycle $C_i$, with vertex set $V^*$.

%Let $C_i$ be a cycle given by Claim \ref{CLAIM1}. We prove Claims \ref{CLAIM2}
%and \ref{CLAIM3} and yet begin with Claim \ref{CLAIM4}.

\bigskip
\noindent
{\it Proof of (b):}
If $d^+(S_1,z)\le 1$ for some $z\in V^*$, then
$d^+(V^*,S_1) \ge d^+(z,S_1)\ge |S_1|-1$.  Using part (a),
we obtain
\begin{equation}\label{E1}
|S_1|-1\le \FR{|S_1|+1}{2q-2}|S_1|.
\end{equation}
With $s=|S_1|$, the inequality can be rewritten as $2q-2-s(2q-3-s)\ge0$.
However, with $2\le s\le q$, the left side of this is negative when $q\ge5$.

\bigskip
\noindent
{\it Proof of (c):}
If $d^+(u,V^*)\le2$ for some $u \in S_1$.
Now $d^+(V^*,S_1)\ge d^+(V^*,u)\ge q-2$.
Using (a) and $|S_1|\le q-1$, we obtain
$q-2\le \FR{q}{2q-2}(q-1)$, which requires $q\le4$.

%\bigskip
%\noindent
%{\it Proof of (d):}
%If the claim fails, then all vertices of $V^*$ except four have at least
%one successor in $S_1$.  Thus $d^+(V^*,S_1)\geq q-4$.
%On the other hand, by part (a) and $|S_1|\le q-1$, we have
%$d^+(V^*, S_1) \leq \frac{|S_1|+3}{2q}|S_1|\le \frac{q+2}{2q}|S_1|
%<\frac{q}{2}+\frac{1}{2}$.
%Hence $q-4<\frac{q}{2}+\frac{1}{2}$, contradicting $q\ge 9$.

\bigskip
\noindent
{\it Proof of (d):}
Since $z$ is dominated by $S_1$ and has at least $r$ predecessors in $V^*$,
$$d^+(z,S_2)= d^+(z)-d^+(z,\bigcup_i V_i) \ge (q-1)k-1-[q(k-1)-(r+1)]=q-k+r.$$
Since $k\leq q$, we obtain $d^+(z,S_2)\ge r$.
\end{proof}

\section{The Case of Small $k$}\label{SecA}

In this section we prove Theorem~\ref{ThC} for $k\le q$, stated as
Theorem~\ref{THMA}.  Essentially, we provide an algorithm
to produce the desired family ${\cF}^*$ of $k$ disjoint $q$-cycles by
iteratively increasing the length of a cycle found outside the $k-1$ disjoint
$q$-cycles.  The subtournament $T'$ induced by the vertices not in the given
cycles has a spanning path $P$; let $v$ be its last vertex.  If $v$ lies in a
cycle of length at least $q$ in $T'$, then by pancyclicity there is a $q$-cycle
in $T'$, and we are done.  Hence our approach, given a longest cycle through
$v$ in $T'$ (in the first step the length may be $0$), is to rearrange ${\cF}$
to find a new family ${\cF'}$ of $k-1$ disjoint $q$-cycles so that the vertex
at the end of the resulting remaining path $P'$ lies in a longer cycle.

Some of the claims in this argument are not valid when $q=4$.  Nevertheless,
the same framework applies when $q=4$, with additional more detailed reasoning.

\begin{theorem}\label{THMA}
Given $k,q\in\NN$ with $q \geq 5$ and $k \leq q$, let $T$ be a tournament
with $\delta^+(T)\ge(q-1)k-1$.  For any family ${\cF}$ of $k-1$ disjoint
$q$-cycles in $T$, there is a family $\cF^*$ of $k$ disjoint $q$-cycles in $T$
whose union has at most $3q-6$ vertices outside the cycles in ${\cF}$.
\end{theorem}

\begin{proof}
These hypotheses are the same as those of Lemma~\ref{lem4} once we obtain a
partition $(S_2,S_1)$ of the vertices outside ${\cF}$ such that $S_2$ dominates
$S_1$.  Given such a partition, let $C^*$ with vertex set $V^*$ be the cycle in
$\cF$ guaranteed by Lemma~\ref{lem4}(a).  Let $P$ be a spanning path through
the subtournament $T'$ of vertices not used by $\cF$, with last vertex $v$.

When $T'$ has a cycle through $v$, let $l$ be the maximum length of such a
cycle; otherwise $l=0$.  If $l\ge q$, then pancyclicity of the subtournament
spanned by this cycle provides a $q$-cycle to complete $\cF^*$.
Otherwise, we obtain a new family $\cF'$ where $l$ is larger, generally by
replacing $C^*$ with a new cycle $\widehat C$ and defining a new path $P'$
through the vertices outside $\cF'$.   We will use at most two new vertices in
$\widehat C$ at each step that increases $l$, except that the steps to reach
$l\ge4$ will use at most three new vertices. In addition, when a sufficiently
long cycle appears, it uses at most $q-1$ new vertices, because the
subtournament that was outside the $k-1$ given $q$-cycles entering that step
did not contain a cycle of length at least $q$. Thus in total at most $3q-6$
new vertices are used.

\medskip

{\bf Case 1:}
{\it $l=0$, so $T'$ has no cycle through $v$.}
We seek $\widehat C$ and $P'$ so that there is a cycle outside $\cF'$
through the last vertex of $P'$.  Let $u$ be the predecessor of $v$ on $P$.

\smallskip

{\bf Case 1a.} {\it $T'$ has no cycle through $u$}
(see Figure~\ref{fig1a}).
Let $S_1=\{u,v\}$ and $S_2=V(T')-S_1$.  Since $T'$ has no cycle through $u$
or $v$, $S_2$ dominates $S_1$.  By Lemma~\ref{lem4}(a), there
exists $C^* \in {\cF}$ such that $d^{+}(V^*,S_1)\le \FR3{2q-2}\cdot 2$,
and $\FR3{2q-2}\cdot 2<1$ when $q\ge5$.  Thus $S_1$ dominates $V^*$.

%When $q\ge6$, Lemma~\ref{lem2} implies that $T[V^*]$ contains a cycle $C''$ of
%length $q-2$ such that $T[V^*]-V(C'')$ is an edge $xy$ with $y$ having at
%least two predecessors in $C''$.  When $q=5$. Lemma~\ref{lem2} only guarantees
%that $y$ has at least one predecessor in $C''$, but it also has $x$ as a
%predecessor, so when $q\ge5$ there are at least two predecessors of $y$ in $C$.
Since $q\ge5$, Lemma~\ref{lem2} implies that $T[V^*]$ contains a cycle $C''$
of length $q-2$ such that $y$ has a predecessor in $V(C'')$, where $xy$ is the
edge in $T[V^*]-V(C'')$.  Choose three vertices in $V(C'')$.  By
Lemma~\ref{lem3}, among them is a vertex $z$ with at least two predecessors in
$V^*$.  Since also $z$ is dominated by $S_1$,
Lemma~\ref{lem4}(d) guarantees $d^+(z,S_2)\ge2$.  Let $w$ be a successor of $z$
in $S_2$, and let $z'$ be the successor of $z$ on $C''$.  Replacing $zz'$ in
$C''$ with $\la z,w,u,z'\ra$ yields a $q$-cycle $\widehat C$ with two vertices
outside ${\cF}$.  Replace $C^*$ with $\widehat C$ to form $\cF'$ from $\cF$.

Since $S_2$ dominates $S_1$, we can form the $P'$ outside $\cF'$ by appending
$v,x,y$ in order to a spanning path of $T[S_2]-w$.  Since $S_1$ dominates
$V^*$, and $y$ has at least $x$ and a vertex of $C''$ as predecessors in $V^*$,
Lemma~\ref{lem4}(d) yields $d^+(y,S_2)\ge2$.  Thus $y$ has a successor in $S_2$
other than $w$, so there is a cycle through $y$ using vertices of $P'$.

\begin{figure}[hbt]
\begin{center}
\gpic{
\expandafter\ifx\csname graph\endcsname\relax \csname newbox\endcsname\graph\fi
\expandafter\ifx\csname graphtemp\endcsname\relax \csname newdimen\endcsname\graphtemp\fi
\setbox\graph=\vtop{\vskip 0pt\hbox{%
    \special{pn 8}%
    \special{pa 131 787}%
    \special{pa 1115 787}%
    \special{fp}%
    \special{sh 1.000}%
    \special{pa 984 754}%
    \special{pa 1115 787}%
    \special{pa 984 820}%
    \special{pa 984 754}%
    \special{fp}%
    \special{pa 1115 787}%
    \special{pa 1770 787}%
    \special{fp}%
    \special{pa 131 131}%
    \special{pa 721 131}%
    \special{fp}%
    \special{sh 1.000}%
    \special{pa 590 98}%
    \special{pa 721 131}%
    \special{pa 590 164}%
    \special{pa 590 98}%
    \special{fp}%
    \special{pa 721 131}%
    \special{pa 1115 131}%
    \special{fp}%
    \special{pa 1115 131}%
    \special{pa 1508 131}%
    \special{fp}%
    \special{sh 1.000}%
    \special{pa 1377 98}%
    \special{pa 1508 131}%
    \special{pa 1377 164}%
    \special{pa 1377 98}%
    \special{fp}%
    \special{pa 1508 131}%
    \special{pa 1770 131}%
    \special{fp}%
    \graphtemp=.5ex\advance\graphtemp by 0.787in
    \rlap{\kern 0.459in\lower\graphtemp\hbox to 0pt{\hss $\bu$\hss}}%
    \graphtemp=.5ex\advance\graphtemp by 0.787in
    \rlap{\kern 1.443in\lower\graphtemp\hbox to 0pt{\hss $\bu$\hss}}%
    \graphtemp=.5ex\advance\graphtemp by 0.787in
    \rlap{\kern 1.770in\lower\graphtemp\hbox to 0pt{\hss $\bu$\hss}}%
    \graphtemp=.5ex\advance\graphtemp by 0.131in
    \rlap{\kern 0.459in\lower\graphtemp\hbox to 0pt{\hss $\bu$\hss}}%
    \graphtemp=.5ex\advance\graphtemp by 0.131in
    \rlap{\kern 0.459in\lower\graphtemp\hbox to 0pt{\hss $\bu$\hss}}%
    \graphtemp=.5ex\advance\graphtemp by 0.131in
    \rlap{\kern 0.787in\lower\graphtemp\hbox to 0pt{\hss $\bu$\hss}}%
    \graphtemp=.5ex\advance\graphtemp by 0.295in
    \rlap{\kern 1.443in\lower\graphtemp\hbox to 0pt{\hss $\bu$\hss}}%
    \graphtemp=.5ex\advance\graphtemp by 0.131in
    \rlap{\kern 1.770in\lower\graphtemp\hbox to 0pt{\hss $\bu$\hss}}%
    \special{pn 28}%
    \special{ar 623 1347 1311 1311 -1.955193 -1.186400}%
    \special{ar 951 -428 1311 1311 1.186400 1.955193}%
    \special{pn 8}%
    \special{pa 1770 787}%
    \special{pa 1574 492}%
    \special{fp}%
    \special{sh 1.000}%
    \special{pa 1619 619}%
    \special{pa 1574 492}%
    \special{pa 1674 583}%
    \special{pa 1619 619}%
    \special{fp}%
    \special{pa 1574 492}%
    \special{pa 1443 295}%
    \special{fp}%
    \special{pa 1443 295}%
    \special{pa 1639 197}%
    \special{fp}%
    \special{sh 1.000}%
    \special{pa 1507 226}%
    \special{pa 1639 197}%
    \special{pa 1537 285}%
    \special{pa 1507 226}%
    \special{fp}%
    \special{pa 1639 197}%
    \special{pa 1770 131}%
    \special{fp}%
    \special{pn 28}%
    \special{pa 459 131}%
    \special{pa 459 787}%
    \special{fp}%
    \special{pn 8}%
    \special{pa 459 393}%
    \special{pa 459 525}%
    \special{fp}%
    \special{sh 1.000}%
    \special{pa 492 393}%
    \special{pa 459 525}%
    \special{pa 426 393}%
    \special{pa 492 393}%
    \special{fp}%
    \special{pn 28}%
    \special{pa 1443 787}%
    \special{pa 787 131}%
    \special{fp}%
    \special{pn 8}%
    \special{pa 1180 525}%
    \special{pa 1049 393}%
    \special{fp}%
    \special{sh 1.000}%
    \special{pa 1119 509}%
    \special{pa 1049 393}%
    \special{pa 1165 463}%
    \special{pa 1119 509}%
    \special{fp}%
    \special{pn 28}%
    \special{pa 131 131}%
    \special{pa 459 131}%
    \special{fp}%
    \special{pa 787 131}%
    \special{pa 1115 131}%
    \special{fp}%
    \special{pn 8}%
    \special{pa 1344 869}%
    \special{pa 1869 869}%
    \special{pa 1869 705}%
    \special{pa 1344 705}%
    \special{pa 1344 869}%
    \special{da 0.066}%
    \graphtemp=.5ex\advance\graphtemp by 0.918in
    \rlap{\kern 0.459in\lower\graphtemp\hbox to 0pt{\hss $w$\hss}}%
    \graphtemp=.5ex\advance\graphtemp by 0.918in
    \rlap{\kern 1.443in\lower\graphtemp\hbox to 0pt{\hss $u$\hss}}%
    \graphtemp=.5ex\advance\graphtemp by 0.918in
    \rlap{\kern 1.770in\lower\graphtemp\hbox to 0pt{\hss $v$\hss}}%
    \graphtemp=.5ex\advance\graphtemp by 0.787in
    \rlap{\kern 2.000in\lower\graphtemp\hbox to 0pt{\hss $S_1$\hss}}%
    \graphtemp=.5ex\advance\graphtemp by 0.787in
    \rlap{\kern 0.000in\lower\graphtemp\hbox to 0pt{\hss $P$\hss}}%
    \graphtemp=.5ex\advance\graphtemp by 0.131in
    \rlap{\kern 0.000in\lower\graphtemp\hbox to 0pt{\hss $C''$\hss}}%
    \graphtemp=.5ex\advance\graphtemp by 0.224in
    \rlap{\kern 0.366in\lower\graphtemp\hbox to 0pt{\hss $z$\hss}}%
    \graphtemp=.5ex\advance\graphtemp by 0.224in
    \rlap{\kern 0.694in\lower\graphtemp\hbox to 0pt{\hss $z'$\hss}}%
    \graphtemp=.5ex\advance\graphtemp by 0.295in
    \rlap{\kern 1.311in\lower\graphtemp\hbox to 0pt{\hss $x$\hss}}%
    \graphtemp=.5ex\advance\graphtemp by 0.131in
    \rlap{\kern 1.902in\lower\graphtemp\hbox to 0pt{\hss $y$\hss}}%
    \hbox{\vrule depth0.918in width0pt height 0pt}%
    \kern 2.000in
  }%
}%
}
%S:elli(p40+h5,.5,1.5);
\vspace{-1pc}
\caption{Case 1a of Theorem~\ref{THMA}.\label{fig1a}}
\end{center}
\end{figure}
\vspace{-1pc}

\medskip
{\bf Case 1b.} {\it $T'$ has a cycle containing $u$.}
Let $B$ be a longest cycle containing $u$ in $T'$.  Let $S_1=V(B)\cup \{v\}$
and $S_2=V(T')-S_1$.  Any edge from $V(B)$ to $S_2$ yields a larger strong
tournament in $T'$ containing $V(B)$, which contains a larger cycle
containing $u$.  Hence $S_2$ dominates $S_1$.  Also $|V(B)|\le q-1$, since
otherwise we have the $k$th $q$-cycle.  Hence $4\le |S_1|\le q$, so
Lemma~\ref{lem4} applies.  Choose $C^*$ with vertex set $V^*$ as given by
Lemma~\ref{lem4}.

We will use the following ``degree fact''.  If ${\cF'}$ is a family
of $k-1$ disjoint $q$-cycles, and the last vertex $v'$ in a spanning path $P'$
through the set $S$ of remaining vertices has at least two predecessors used by
$\cF'$, then $v'$ has a successor in $S$.  The reason, using $k\le q$, is
$$[(q-1)k-1]-[q(k-1)-2]=q-k+1\ge1.$$
There is then a cycle in $S$ through $v'$, as desired.

The degree fact implies $d^+(V^*,v)\le1$; otherwise already $T'$ has a cycle
through $v$.  First suppose $d^+(V^*,v) = 1$ (see Figure~\ref{fig1b1}).  Let
$z$ be the predecessor of $v$ in $V^*$, and let $y$ be the successor of $z$ on
$C^*$.  Form $\cF'$ by replacing $C^*$ with the cycle $\widehat C$ obtained
from $C^*$ by replacing $y$ with $v$ (the successor of $y$ on $C^*$ is a
successor of $v$).
Since $4\le|S_1|\le q$, by Lemma~\ref{lem4}(b) $y$ has at least two
predecessors in $S_1$, at least one in $B$ (call this vertex $w$).
Since $S_2$ dominates $S_1$, we can form $P'$ by following $P$ through
$S_2$, then $B$ from the successor of $w$ on $B$ to $w$, and finally the
edge $wy$.  Since $v$ and $z$ are predecessors of $y$, the degree fact yields
a cycle outside $\cF'$ through $y$.  The only vertex used by $\cF'$ and not
by $\cF$ is $v$.

\begin{figure}[hbt]
\begin{center}
\gpic{
\expandafter\ifx\csname graph\endcsname\relax \csname newbox\endcsname\graph\fi
\expandafter\ifx\csname graphtemp\endcsname\relax \csname newdimen\endcsname\graphtemp\fi
\setbox\graph=\vtop{\vskip 0pt\hbox{%
    \special{pn 11}%
    \special{ar 1311 840 252 118 0 6.28319}%
    \special{pn 8}%
    \special{pa 134 840}%
    \special{pa 689 840}%
    \special{fp}%
    \special{sh 1.000}%
    \special{pa 555 807}%
    \special{pa 689 840}%
    \special{pa 555 874}%
    \special{pa 555 807}%
    \special{fp}%
    \special{pa 689 840}%
    \special{pa 1059 840}%
    \special{fp}%
    \special{pa 471 168}%
    \special{pa 1076 168}%
    \special{fp}%
    \special{sh 1.000}%
    \special{pa 941 134}%
    \special{pa 1076 168}%
    \special{pa 941 202}%
    \special{pa 941 134}%
    \special{fp}%
    \special{pa 1076 168}%
    \special{pa 1479 168}%
    \special{fp}%
    \special{pn 11}%
    \special{pa 1563 840}%
    \special{pa 1815 840}%
    \special{fp}%
    \graphtemp=.5ex\advance\graphtemp by 0.168in
    \rlap{\kern 1.479in\lower\graphtemp\hbox to 0pt{\hss $\bu$\hss}}%
    \graphtemp=.5ex\advance\graphtemp by 0.757in
    \rlap{\kern 1.133in\lower\graphtemp\hbox to 0pt{\hss $\bu$\hss}}%
    \graphtemp=.5ex\advance\graphtemp by 0.840in
    \rlap{\kern 1.815in\lower\graphtemp\hbox to 0pt{\hss $\bu$\hss}}%
    \graphtemp=.5ex\advance\graphtemp by 0.168in
    \rlap{\kern 0.807in\lower\graphtemp\hbox to 0pt{\hss $\bu$\hss}}%
    \graphtemp=.5ex\advance\graphtemp by 0.168in
    \rlap{\kern 0.807in\lower\graphtemp\hbox to 0pt{\hss $\bu$\hss}}%
    \graphtemp=.5ex\advance\graphtemp by 0.168in
    \rlap{\kern 1.143in\lower\graphtemp\hbox to 0pt{\hss $\bu$\hss}}%
    \graphtemp=.5ex\advance\graphtemp by 0.874in
    \rlap{\kern 1.429in\lower\graphtemp\hbox to 0pt{\hss $B$\hss}}%
    \special{pn 28}%
    \special{ar 1143 1708 1681 1681 -1.982313 -1.159279}%
    \special{pn 11}%
    \special{pa 807 168}%
    \special{pa 1479 168}%
    \special{fp}%
    \special{pn 8}%
    \special{pa 1133 757}%
    \special{pa 1139 404}%
    \special{fp}%
    \special{sh 1.000}%
    \special{pa 1103 538}%
    \special{pa 1139 404}%
    \special{pa 1170 539}%
    \special{pa 1103 538}%
    \special{fp}%
    \special{pa 1139 404}%
    \special{pa 1143 168}%
    \special{fp}%
    \special{pa 1025 1008}%
    \special{pa 1866 1008}%
    \special{pa 1866 672}%
    \special{pa 1025 672}%
    \special{pa 1025 1008}%
    \special{da 0.067}%
    \graphtemp=.5ex\advance\graphtemp by 0.840in
    \rlap{\kern 2.000in\lower\graphtemp\hbox to 0pt{\hss $S_1$\hss}}%
    \special{pn 28}%
    \special{pa 807 168}%
    \special{pa 1815 840}%
    \special{fp}%
    \special{pn 8}%
    \special{pa 1210 437}%
    \special{pa 1412 571}%
    \special{fp}%
    \special{sh 1.000}%
    \special{pa 1319 469}%
    \special{pa 1412 571}%
    \special{pa 1281 525}%
    \special{pa 1319 469}%
    \special{fp}%
    \special{pn 28}%
    \special{pa 1815 840}%
    \special{pa 1479 168}%
    \special{fp}%
    \special{pn 8}%
    \special{pa 1681 571}%
    \special{pa 1613 437}%
    \special{fp}%
    \special{sh 1.000}%
    \special{pa 1644 572}%
    \special{pa 1613 437}%
    \special{pa 1704 542}%
    \special{pa 1644 572}%
    \special{fp}%
    \special{pn 28}%
    \special{pa 471 168}%
    \special{pa 807 168}%
    \special{fp}%
    \special{pa 1479 168}%
    \special{pa 1815 168}%
    \special{fp}%
    \graphtemp=.5ex\advance\graphtemp by 0.263in
    \rlap{\kern 1.238in\lower\graphtemp\hbox to 0pt{\hss $y$\hss}}%
    \graphtemp=.5ex\advance\graphtemp by 0.819in
    \rlap{\kern 1.228in\lower\graphtemp\hbox to 0pt{\hss $w$\hss}}%
    \graphtemp=.5ex\advance\graphtemp by 0.941in
    \rlap{\kern 1.815in\lower\graphtemp\hbox to 0pt{\hss $v$\hss}}%
    \graphtemp=.5ex\advance\graphtemp by 0.840in
    \rlap{\kern 0.000in\lower\graphtemp\hbox to 0pt{\hss $P$\hss}}%
    \graphtemp=.5ex\advance\graphtemp by 0.168in
    \rlap{\kern 0.336in\lower\graphtemp\hbox to 0pt{\hss $C^*$\hss}}%
    \graphtemp=.5ex\advance\graphtemp by 0.269in
    \rlap{\kern 0.807in\lower\graphtemp\hbox to 0pt{\hss $z$\hss}}%
    \hbox{\vrule depth1.008in width0pt height 0pt}%
    \kern 2.000in
  }%
}%
}
\vspace{-1pc}
\caption{Case 1b of Theorem~\ref{THMA} when $d^+(V^*,v)=1$.\label{fig1b1} }
\end{center}
\end{figure}
\vspace{-1pc}

Therefore, we may assume $d^+(V^*,v)=0$, so $v$ dominates $V^*$ (see
Figures~\ref{fig1b2} and~\ref{fig1b3}).  By Lemma~\ref{lem1}, $T[V^*]$ contains
a cycle $C'$ of length $q-1$ such that the vertex $y$ of $V^*-V(C')$ has at
least two predecessors in $C'$.  Let $V'=V(C')$.

If $T[V'\cup\{w\}]$ is strong for some $w\in V(B)$, then let $\widehat C$ be
a spanning cycle in $T[V'\cup\{w\}]$ (outside $\cF$ only $w$ is used).  Since
$V(P)-\{v\}$ dominates $v$, we can let $P'$ be a path through all of
$V(P)-\{w,v\}$ followed by $v$ and $y$.  Since $d^+(V',y)\ge2$,
the degree fact applies to $P'$.

\begin{figure}[hbt]
\begin{center}
\gpic{
\expandafter\ifx\csname graph\endcsname\relax \csname newbox\endcsname\graph\fi
\expandafter\ifx\csname graphtemp\endcsname\relax \csname newdimen\endcsname\graphtemp\fi
\setbox\graph=\vtop{\vskip 0pt\hbox{%
    \special{pn 11}%
    \special{ar 1311 840 252 118 0 6.28319}%
    \special{pn 8}%
    \special{pa 134 840}%
    \special{pa 689 840}%
    \special{fp}%
    \special{sh 1.000}%
    \special{pa 555 807}%
    \special{pa 689 840}%
    \special{pa 555 874}%
    \special{pa 555 807}%
    \special{fp}%
    \special{pa 689 840}%
    \special{pa 1059 840}%
    \special{fp}%
    \special{pa 471 168}%
    \special{pa 1076 168}%
    \special{fp}%
    \special{sh 1.000}%
    \special{pa 941 134}%
    \special{pa 1076 168}%
    \special{pa 941 202}%
    \special{pa 941 134}%
    \special{fp}%
    \special{pa 1076 168}%
    \special{pa 1479 168}%
    \special{fp}%
    \special{pn 11}%
    \special{pa 1563 840}%
    \special{pa 1815 840}%
    \special{fp}%
    \graphtemp=.5ex\advance\graphtemp by 0.168in
    \rlap{\kern 1.479in\lower\graphtemp\hbox to 0pt{\hss $\bu$\hss}}%
    \graphtemp=.5ex\advance\graphtemp by 0.757in
    \rlap{\kern 1.133in\lower\graphtemp\hbox to 0pt{\hss $\bu$\hss}}%
    \graphtemp=.5ex\advance\graphtemp by 0.840in
    \rlap{\kern 1.815in\lower\graphtemp\hbox to 0pt{\hss $\bu$\hss}}%
    \graphtemp=.5ex\advance\graphtemp by 0.168in
    \rlap{\kern 0.807in\lower\graphtemp\hbox to 0pt{\hss $\bu$\hss}}%
    \graphtemp=.5ex\advance\graphtemp by 0.168in
    \rlap{\kern 0.807in\lower\graphtemp\hbox to 0pt{\hss $\bu$\hss}}%
    \graphtemp=.5ex\advance\graphtemp by 0.168in
    \rlap{\kern 1.479in\lower\graphtemp\hbox to 0pt{\hss $\bu$\hss}}%
    \graphtemp=.5ex\advance\graphtemp by 0.168in
    \rlap{\kern 1.815in\lower\graphtemp\hbox to 0pt{\hss $\bu$\hss}}%
    \graphtemp=.5ex\advance\graphtemp by 0.874in
    \rlap{\kern 1.429in\lower\graphtemp\hbox to 0pt{\hss $B$\hss}}%
    \special{pn 8}%
    \special{pa 1815 840}%
    \special{pa 1815 437}%
    \special{fp}%
    \special{sh 1.000}%
    \special{pa 1782 571}%
    \special{pa 1815 437}%
    \special{pa 1849 571}%
    \special{pa 1782 571}%
    \special{fp}%
    \special{pa 1815 437}%
    \special{pa 1815 168}%
    \special{fp}%
    \special{pn 28}%
    \special{ar 975 1414 1345 1345 -1.955193 -1.186400}%
    \special{pn 11}%
    \special{pa 807 168}%
    \special{pa 1479 168}%
    \special{fp}%
    \special{pn 8}%
    \special{pa 1133 757}%
    \special{pa 1340 404}%
    \special{fp}%
    \special{sh 1.000}%
    \special{pa 1243 503}%
    \special{pa 1340 404}%
    \special{pa 1301 537}%
    \special{pa 1243 503}%
    \special{fp}%
    \special{pa 1340 404}%
    \special{pa 1479 168}%
    \special{fp}%
    \special{pa 1025 1008}%
    \special{pa 1866 1008}%
    \special{pa 1866 672}%
    \special{pa 1025 672}%
    \special{pa 1025 1008}%
    \special{da 0.067}%
    \graphtemp=.5ex\advance\graphtemp by 0.840in
    \rlap{\kern 2.000in\lower\graphtemp\hbox to 0pt{\hss $S_1$\hss}}%
    \special{pn 28}%
    \special{pa 807 168}%
    \special{pa 1133 757}%
    \special{fp}%
    \special{pn 8}%
    \special{pa 937 404}%
    \special{pa 1002 522}%
    \special{fp}%
    \special{sh 1.000}%
    \special{pa 967 388}%
    \special{pa 1002 522}%
    \special{pa 908 420}%
    \special{pa 967 388}%
    \special{fp}%
    \special{pn 28}%
    \special{pa 1133 757}%
    \special{pa 1479 168}%
    \special{fp}%
    \special{pn 8}%
    \special{pa 1271 522}%
    \special{pa 1340 404}%
    \special{fp}%
    \special{sh 1.000}%
    \special{pa 1243 503}%
    \special{pa 1340 404}%
    \special{pa 1301 537}%
    \special{pa 1243 503}%
    \special{fp}%
    \special{pn 28}%
    \special{pa 471 168}%
    \special{pa 807 168}%
    \special{fp}%
    \special{pa 1479 168}%
    \special{pa 471 168}%
    \special{fp}%
    \graphtemp=.5ex\advance\graphtemp by 0.263in
    \rlap{\kern 1.910in\lower\graphtemp\hbox to 0pt{\hss $y$\hss}}%
    \graphtemp=.5ex\advance\graphtemp by 0.819in
    \rlap{\kern 1.228in\lower\graphtemp\hbox to 0pt{\hss $w$\hss}}%
    \graphtemp=.5ex\advance\graphtemp by 0.941in
    \rlap{\kern 1.815in\lower\graphtemp\hbox to 0pt{\hss $v$\hss}}%
    \graphtemp=.5ex\advance\graphtemp by 0.840in
    \rlap{\kern 0.000in\lower\graphtemp\hbox to 0pt{\hss $P$\hss}}%
    \graphtemp=.5ex\advance\graphtemp by 0.168in
    \rlap{\kern 0.336in\lower\graphtemp\hbox to 0pt{\hss $C'$\hss}}%
    \graphtemp=.5ex\advance\graphtemp by 0.230in
    \rlap{\kern 0.712in\lower\graphtemp\hbox to 0pt{\hss $~$\hss}}%
    \hbox{\vrule depth1.008in width0pt height 0pt}%
    \kern 2.000in
  }%
}%
}
\vspace{-1pc}
\caption{Case 1b of Theorem~\ref{THMA} when $d^+(V^*,v)=0$
and $T[V'\cup\{w\}]$ is strong.\label{fig1b2} }
\end{center}
\end{figure}
\vspace{-1pc}

In the remaining case, every vertex of $B$ dominates or is dominated by $V'$.
If $V'$ dominates some vertex of $B$, then $d^+(V^*,S_1)\ge q-1$, but
$d^+(V^*,S_1)\le \frac{|S_1|+1}{2q-2}|S_1|$ by Lemma~\ref{lem4}(a).  Since
$q\!-\!1\le \frac{|S_1|+1}{2q-2}|S_1|\le \FR{q^2+q}{2q-2}$ requires $q\!<\!5$,
we conclude that $V(B)$ dominates $V'$.

Since also $d^+(V^*,v)=0$, now $S_1$ dominates $V'$ (see Figure~\ref{fig1b3}).
The remaining argument is
similar to Case 1a.  Since $q-1\ge4$, by Lemma~\ref{lem1} there is a cycle
$C''$ of length $q-2$ in $T[V']$ such that the vertex $y'$ of $V'-V(C'')$ has
at least two predecessors in $V(C'')$.  Now $y$ and $y'$ each have at
at least two predecessors in $V^*$.  Let $zz'$ be the edge joining $y$ and $y'$.

Since $S_1$ dominates $V'$, any vertex of $C''$ has a successor in $S_2$, by
Lemma~\ref{lem4}(d); let $w$ be one such successor.  Now $T[V(C'')\cup\{w,v\}]$
is strong and has a spanning cycle $\widehat C$ of length $q$.  Form ${\cF'}$
from ${\cF}$ by replacing $C^*$ with $\widehat C$ (note that $\cF'$ uses only
$w$ and $v$ outside ${\cF}$).

Using Lemma~\ref{lem4}(b), $d^+(V(B),z)\ge1$; let $x$ be a predecessor of $z$
in $V(B)$.  Since $S_2$ dominates $S_1$, we can build a path $P'$ that starts
with all of $S_2-\{w\}$ (in some order), then visits all of $V(B)$ ending with
$x$, and finally follows $\la x,z,z'\ra$.  Since $v$ dominates $V^*$, vertex
$z'$ has at least two predecessors in $\widehat C$, and the degree fact applies.

\begin{figure}[hbt]
\begin{center}
\gpic{
\expandafter\ifx\csname graph\endcsname\relax \csname newbox\endcsname\graph\fi
\expandafter\ifx\csname graphtemp\endcsname\relax \csname newdimen\endcsname\graphtemp\fi
\setbox\graph=\vtop{\vskip 0pt\hbox{%
    \special{pn 11}%
    \special{ar 1279 787 246 115 0 6.28319}%
    \special{pn 8}%
    \special{pa 459 787}%
    \special{pa 803 787}%
    \special{fp}%
    \special{sh 1.000}%
    \special{pa 672 754}%
    \special{pa 803 787}%
    \special{pa 672 820}%
    \special{pa 672 754}%
    \special{fp}%
    \special{pa 803 787}%
    \special{pa 1033 787}%
    \special{fp}%
    \special{pn 11}%
    \special{pa 1525 787}%
    \special{pa 1770 787}%
    \special{fp}%
    \graphtemp=.5ex\advance\graphtemp by 0.787in
    \rlap{\kern 0.459in\lower\graphtemp\hbox to 0pt{\hss $\bu$\hss}}%
    \graphtemp=.5ex\advance\graphtemp by 0.706in
    \rlap{\kern 1.105in\lower\graphtemp\hbox to 0pt{\hss $\bu$\hss}}%
    \graphtemp=.5ex\advance\graphtemp by 0.787in
    \rlap{\kern 1.770in\lower\graphtemp\hbox to 0pt{\hss $\bu$\hss}}%
    \graphtemp=.5ex\advance\graphtemp by 0.131in
    \rlap{\kern 0.459in\lower\graphtemp\hbox to 0pt{\hss $\bu$\hss}}%
    \graphtemp=.5ex\advance\graphtemp by 0.131in
    \rlap{\kern 0.459in\lower\graphtemp\hbox to 0pt{\hss $\bu$\hss}}%
    \graphtemp=.5ex\advance\graphtemp by 0.131in
    \rlap{\kern 0.459in\lower\graphtemp\hbox to 0pt{\hss $\bu$\hss}}%
    \graphtemp=.5ex\advance\graphtemp by 0.295in
    \rlap{\kern 1.443in\lower\graphtemp\hbox to 0pt{\hss $\bu$\hss}}%
    \graphtemp=.5ex\advance\graphtemp by 0.131in
    \rlap{\kern 1.770in\lower\graphtemp\hbox to 0pt{\hss $\bu$\hss}}%
    \special{pn 28}%
    \special{ar 623 1347 1311 1311 -1.955193 -1.186400}%
    \special{ar 1115 -154 1148 1148 0.962551 2.179042}%
    \special{pn 8}%
    \special{pa 1105 706}%
    \special{pa 1307 459}%
    \special{fp}%
    \special{sh 1.000}%
    \special{pa 1199 540}%
    \special{pa 1307 459}%
    \special{pa 1250 581}%
    \special{pa 1199 540}%
    \special{fp}%
    \special{pa 1307 459}%
    \special{pa 1443 295}%
    \special{fp}%
    \special{pa 1443 295}%
    \special{pa 1639 197}%
    \special{fp}%
    \special{sh 1.000}%
    \special{pa 1507 226}%
    \special{pa 1639 197}%
    \special{pa 1537 285}%
    \special{pa 1507 226}%
    \special{fp}%
    \special{pa 1639 197}%
    \special{pa 1770 131}%
    \special{fp}%
    \special{pn 28}%
    \special{pa 459 131}%
    \special{pa 459 787}%
    \special{fp}%
    \special{pn 8}%
    \special{pa 459 393}%
    \special{pa 459 525}%
    \special{fp}%
    \special{sh 1.000}%
    \special{pa 492 393}%
    \special{pa 459 525}%
    \special{pa 426 393}%
    \special{pa 492 393}%
    \special{fp}%
    \special{pn 28}%
    \special{pa 1770 787}%
    \special{pa 1115 131}%
    \special{fp}%
    \special{pn 8}%
    \special{pa 1508 525}%
    \special{pa 1377 393}%
    \special{fp}%
    \special{sh 1.000}%
    \special{pa 1447 509}%
    \special{pa 1377 393}%
    \special{pa 1493 463}%
    \special{pa 1447 509}%
    \special{fp}%
    \special{pn 28}%
    \special{pa 131 131}%
    \special{pa 1115 131}%
    \special{fp}%
    \special{pn 8}%
    \special{pa 525 131}%
    \special{pa 721 131}%
    \special{fp}%
    \special{sh 1.000}%
    \special{pa 590 98}%
    \special{pa 721 131}%
    \special{pa 590 164}%
    \special{pa 590 98}%
    \special{fp}%
    \special{pn 11}%
    \special{pa 131 787}%
    \special{pa 459 787}%
    \special{fp}%
    \graphtemp=.5ex\advance\graphtemp by 0.787in
    \rlap{\kern 0.000in\lower\graphtemp\hbox to 0pt{\hss $P$\hss}}%
    \graphtemp=.5ex\advance\graphtemp by 0.131in
    \rlap{\kern 0.000in\lower\graphtemp\hbox to 0pt{\hss $C''$\hss}}%
    \graphtemp=.5ex\advance\graphtemp by 0.131in
    \rlap{\kern 1.902in\lower\graphtemp\hbox to 0pt{\hss $z'$\hss}}%
    \graphtemp=.5ex\advance\graphtemp by 0.820in
    \rlap{\kern 1.393in\lower\graphtemp\hbox to 0pt{\hss $B$\hss}}%
    \special{pn 8}%
    \special{pa 951 951}%
    \special{pa 1869 951}%
    \special{pa 1869 623}%
    \special{pa 951 623}%
    \special{pa 951 951}%
    \special{da 0.066}%
    \graphtemp=.5ex\advance\graphtemp by 0.787in
    \rlap{\kern 2.000in\lower\graphtemp\hbox to 0pt{\hss $S_1$\hss}}%
    \graphtemp=.5ex\advance\graphtemp by 0.918in
    \rlap{\kern 0.459in\lower\graphtemp\hbox to 0pt{\hss $w$\hss}}%
    \graphtemp=.5ex\advance\graphtemp by 0.202in
    \rlap{\kern 1.383in\lower\graphtemp\hbox to 0pt{\hss $z$\hss}}%
    \graphtemp=.5ex\advance\graphtemp by 0.766in
    \rlap{\kern 1.165in\lower\graphtemp\hbox to 0pt{\hss $x$\hss}}%
    \graphtemp=.5ex\advance\graphtemp by 0.885in
    \rlap{\kern 1.770in\lower\graphtemp\hbox to 0pt{\hss $v$\hss}}%
    \hbox{\vrule depth0.993in width0pt height 0pt}%
    \kern 2.000in
  }%
}%
}
\vspace{-1pc}
\caption{Case 1b of Theorem~\ref{THMA} when $d^+(V^*,v)=0$
and $V(B)$ dominates $V'$.\label{fig1b3} }
\end{center}
\end{figure}
\vspace{-1pc}

\medskip

{\bf Case 2:}
{\it $l>0$, so $T'$ has a cycle through $v$, the longest having length $l$.}
Let $C$ be such a cycle of length $l$.  We find a new family ${\cF'}$ and path
$P'$ outside it with a longer cycle through the last vertex of $P'$.
We may assume $l<q$, since otherwise pancyclicity yields the desired $q$-cycle.
Let $S_1=V(C)$ and $S_2=V(T')-S_1$.  If there is an edge from $S_1$ to $S_2$,
then $S_1$ and part of $P$ induce a strong tournament, which has a longer
spanning cycle (containing $v$).  Hence $S_2$ dominates $S_1$, and
Lemma~\ref{lem4} applies.

Let $C^*$ with vertex set $V^*$ be the cycle in ${\cF}$ guaranteed by
Lemma~\ref{lem4}.  Since $q\ge5$, by Lemma~\ref{lem2} $T[V^*]$ contains a cycle
$C''$ of length $q-2$ and an edge $xy$ with $\{x,y\}=V^*-V(C'')$ such that $y$
has at least one predecessor in $V(C'')$.  Let $V''=V(C'')$.
By Lemma~\ref{lem4}(a),
$$
d^+(V'',S_1)\le d^+(V^*,S_1)\le \FR{|S_1|+1}{2q-2}|S_1|
\le \FR q{2q-2}(q-1)=\FR q2.
$$
Since $q/2<q-2$ when $q\ge5$, in $V''$ there is a vertex $z$ dominated by
$S_1$.  By Lemma~\ref{lem4}(d), $z$ has a successor $w$ in $S_2$.

%\medskip
%{\bf Case 2a.} {\it $S_2$ dominates $V''$.}
%By Lemma~\ref{lem4}(d), every vertex of $V^*$ having no successor in $S_2$
%must have a successor in $S_1$.  Hence $d^+(V'',S_1)\ge q-2$.
%If $S_1$ does not dominate $\{x,y\}$, then Lemma~\ref{lem4}(a) yields
%$$
%d^+(V'',S_1)\le d^+(V^*,S_1)-1\le \FR{|S_1|+1}{2q-2}|S_1|-1
%\le \FR q{2q-2}(q-1)-1=\FR q2-1.
%$$
%We now have $q-2\le q/2-1$, which fails when $q\ge3$.
%
%Hence we may assume that $S_1$ dominates $\{x,y\}$.  Since $x$ has at least one
%predecessor in $V^*$ and $y$ has at least two, by Lemma~\ref{lem4}(d) we
%have $d^+(x,S_2)\ge 1$ and $d^+(y,S_2)\ge 2$.  This yields edges $xw$ and
%$yw'$ with $w,w'\in S_2$ and $w\ne w'$.
%
%Since $S_2$ dominates $V''$, the edge $xw$ guarantees that
%$T[V''\cup \{x,w\}]$ is strong, so it has a spanning $q$-cycle
%$\widehat{C}$.  Through the remaining vertices, we build $P'$ by taking
%a spanning path in $T[S_2]-w$, followed by an edge to $S_1$,
%a spanning path in $T[S_1]$, and an edge to $y$.  The edge $yw'$ completes
%a cycle with length more than $l$ through $y$ in $T[V(P')]$.

\medskip
{\bf Case 2a.} {\it $|S_1|=3$} (see Figure~\ref{fig2a}).
Here $d^+(V^*,S_1)\le \FR 4{2q-2}\cdot3<2$, by Lemma~\ref{lem4}(a).
If $x$ or $y$ has a successor in $S_1$, then let $u'$ be this successor;
otherwise choose $u'\in S_1$ arbitrarily.  Let the cycle $C$ through $S_1$ be
$[u',x',u]$, so $x'x\in E(T)$.  Note that $T[V''\cup \{w,u\}]$ is strong, with
a spanning cycle $\widehat C$.  Form $\cF'$ from $\cF$ by replacing $C^*$
with $\widehat C$.

Let $P'$ follow a spanning path through $S_2-\{w\}$ and then $\la u',x',x,y\ra$.
If $yu'\in E(T)$, then we have the cycle $[y,u',x',x]$ through $y$.
Otherwise $d^+(y,S_1)=0$, and by Lemma~\ref{lem4}(d) $y$ has at least two
successors in $S_2$, which means it has one other than $w$.  In this case
$y$ lies on a cycle of length more than $l$ in $T[V(P')]$.

\begin{figure}[hbt]
\begin{center}
\gpic{
\expandafter\ifx\csname graph\endcsname\relax \csname newbox\endcsname\graph\fi
\expandafter\ifx\csname graphtemp\endcsname\relax \csname newdimen\endcsname\graphtemp\fi
\setbox\graph=\vtop{\vskip 0pt\hbox{%
    \graphtemp=.5ex\advance\graphtemp by 0.787in
    \rlap{\kern 0.459in\lower\graphtemp\hbox to 0pt{\hss $\bu$\hss}}%
    \graphtemp=.5ex\advance\graphtemp by 0.689in
    \rlap{\kern 1.344in\lower\graphtemp\hbox to 0pt{\hss $\bu$\hss}}%
    \graphtemp=.5ex\advance\graphtemp by 0.787in
    \rlap{\kern 1.770in\lower\graphtemp\hbox to 0pt{\hss $\bu$\hss}}%
    \graphtemp=.5ex\advance\graphtemp by 0.787in
    \rlap{\kern 1.049in\lower\graphtemp\hbox to 0pt{\hss $\bu$\hss}}%
    \graphtemp=.5ex\advance\graphtemp by 0.131in
    \rlap{\kern 0.459in\lower\graphtemp\hbox to 0pt{\hss $\bu$\hss}}%
    \graphtemp=.5ex\advance\graphtemp by 0.131in
    \rlap{\kern 0.459in\lower\graphtemp\hbox to 0pt{\hss $\bu$\hss}}%
    \graphtemp=.5ex\advance\graphtemp by 0.295in
    \rlap{\kern 1.443in\lower\graphtemp\hbox to 0pt{\hss $\bu$\hss}}%
    \graphtemp=.5ex\advance\graphtemp by 0.131in
    \rlap{\kern 1.770in\lower\graphtemp\hbox to 0pt{\hss $\bu$\hss}}%
    \special{pn 8}%
    \special{pa 459 787}%
    \special{pa 813 787}%
    \special{fp}%
    \special{sh 1.000}%
    \special{pa 682 754}%
    \special{pa 813 787}%
    \special{pa 682 820}%
    \special{pa 682 754}%
    \special{fp}%
    \special{pa 813 787}%
    \special{pa 1049 787}%
    \special{fp}%
    \special{pn 28}%
    \special{ar 623 1347 1311 1311 -1.955193 -1.186400}%
    \special{ar 1115 -154 1148 1148 0.962551 2.179042}%
    \special{pn 8}%
    \special{pa 1344 689}%
    \special{pa 1403 452}%
    \special{fp}%
    \special{sh 1.000}%
    \special{pa 1340 572}%
    \special{pa 1403 452}%
    \special{pa 1403 588}%
    \special{pa 1340 572}%
    \special{fp}%
    \special{pa 1403 452}%
    \special{pa 1443 295}%
    \special{fp}%
    \special{pa 1443 295}%
    \special{pa 1639 197}%
    \special{fp}%
    \special{sh 1.000}%
    \special{pa 1507 226}%
    \special{pa 1639 197}%
    \special{pa 1537 285}%
    \special{pa 1507 226}%
    \special{fp}%
    \special{pa 1639 197}%
    \special{pa 1770 131}%
    \special{fp}%
    \special{pn 28}%
    \special{pa 459 131}%
    \special{pa 459 787}%
    \special{fp}%
    \special{pn 8}%
    \special{pa 459 393}%
    \special{pa 459 525}%
    \special{fp}%
    \special{sh 1.000}%
    \special{pa 492 393}%
    \special{pa 459 525}%
    \special{pa 426 393}%
    \special{pa 492 393}%
    \special{fp}%
    \special{pn 28}%
    \special{pa 1770 787}%
    \special{pa 1115 131}%
    \special{fp}%
    \special{pn 8}%
    \special{pa 1508 525}%
    \special{pa 1377 393}%
    \special{fp}%
    \special{sh 1.000}%
    \special{pa 1447 509}%
    \special{pa 1377 393}%
    \special{pa 1493 463}%
    \special{pa 1447 509}%
    \special{fp}%
    \special{pn 28}%
    \special{pa 131 131}%
    \special{pa 1115 131}%
    \special{fp}%
    \special{pn 8}%
    \special{pa 525 131}%
    \special{pa 721 131}%
    \special{fp}%
    \special{sh 1.000}%
    \special{pa 590 98}%
    \special{pa 721 131}%
    \special{pa 590 164}%
    \special{pa 590 98}%
    \special{fp}%
    \special{pn 11}%
    \special{pa 131 787}%
    \special{pa 459 787}%
    \special{fp}%
    \special{pn 8}%
    \special{pa 1049 787}%
    \special{pa 1226 728}%
    \special{fp}%
    \special{sh 1.000}%
    \special{pa 1091 738}%
    \special{pa 1226 728}%
    \special{pa 1112 800}%
    \special{pa 1091 738}%
    \special{fp}%
    \special{pa 1226 728}%
    \special{pa 1344 689}%
    \special{fp}%
    \special{pa 1344 689}%
    \special{pa 1600 748}%
    \special{fp}%
    \special{sh 1.000}%
    \special{pa 1480 686}%
    \special{pa 1600 748}%
    \special{pa 1465 750}%
    \special{pa 1480 686}%
    \special{fp}%
    \special{pa 1600 748}%
    \special{pa 1770 787}%
    \special{fp}%
    \special{pa 1770 787}%
    \special{pa 1338 787}%
    \special{fp}%
    \special{sh 1.000}%
    \special{pa 1469 820}%
    \special{pa 1338 787}%
    \special{pa 1469 754}%
    \special{pa 1469 820}%
    \special{fp}%
    \special{pa 1338 787}%
    \special{pa 1049 787}%
    \special{fp}%
    \graphtemp=.5ex\advance\graphtemp by 0.787in
    \rlap{\kern 0.000in\lower\graphtemp\hbox to 0pt{\hss $P$\hss}}%
    \graphtemp=.5ex\advance\graphtemp by 0.131in
    \rlap{\kern 0.000in\lower\graphtemp\hbox to 0pt{\hss $C''$\hss}}%
    \graphtemp=.5ex\advance\graphtemp by 0.131in
    \rlap{\kern 1.902in\lower\graphtemp\hbox to 0pt{\hss $y$\hss}}%
    \special{pa 951 951}%
    \special{pa 1869 951}%
    \special{pa 1869 623}%
    \special{pa 951 623}%
    \special{pa 951 951}%
    \special{da 0.066}%
    \graphtemp=.5ex\advance\graphtemp by 0.787in
    \rlap{\kern 2.000in\lower\graphtemp\hbox to 0pt{\hss $S_1$\hss}}%
    \graphtemp=.5ex\advance\graphtemp by 0.918in
    \rlap{\kern 0.459in\lower\graphtemp\hbox to 0pt{\hss $w$\hss}}%
    \graphtemp=.5ex\advance\graphtemp by 0.885in
    \rlap{\kern 1.049in\lower\graphtemp\hbox to 0pt{\hss $u'$\hss}}%
    \graphtemp=.5ex\advance\graphtemp by 0.202in
    \rlap{\kern 1.383in\lower\graphtemp\hbox to 0pt{\hss $x$\hss}}%
    \graphtemp=.5ex\advance\graphtemp by 0.563in
    \rlap{\kern 1.252in\lower\graphtemp\hbox to 0pt{\hss $x'$\hss}}%
    \graphtemp=.5ex\advance\graphtemp by 0.885in
    \rlap{\kern 1.770in\lower\graphtemp\hbox to 0pt{\hss $u$\hss}}%
    \hbox{\vrule depth0.993in width0pt height 0pt}%
    \kern 2.000in
  }%
}%
}
\vspace{-1pc}
\caption{Case 2a of Theorem~\ref{THMA}.\label{fig2a} }
\end{center}
\end{figure}
\vspace{-1pc}

{\bf Case 2b.} {\it $|S_1|\ge4$} (see Figure~\ref{fig2b}).
%Let $zw$ be an edge with $z\in V''$ and $w\in S_2$.
%Since $l<q$, this case can only occur when $q\ge5$.
By pancyclicity, $T[S_1]$ contains a cycle $B$ omitting one vertex $u$ of
$S_1$.  Since $l<q$, Lemma~\ref{lem4}(c) implies $d^+(u,V^*)\ge3$, yielding
an edge $uz'$ with $z'\in V''$.  Using the edge $zw$ from $V''$ to $S_2$
(obtained earlier), the path $\la z,w,u,z'\ra$ guarantees that
$T[V''\cup\{w,u\}]$ is strong and hence has a spanning $q$-cycle $\widehat C$.
Form ${\cF'}$ from ${\cF}$ by replacing $C^*$ with $\widehat{C}$.

By Lemma~\ref{lem4}(b), $x$ has a predecessor $x'$ in $V(B)$.  Build the path
$P'$ outside $\cF'$ by visiting all of $S_2-\{w\}$ (in some order), then all of
$V(B)$ ending with $x'$, and finally $\la x',x,y\ra$.  Since $y$ has at least
two predecessors in $V''\cup\{x\}$ and hence at most $q-3$ successors in $V^*$,
$$
d^+(y,S_1\cup S_2)\ge [(q-1)k-1]-q(k-2)-(q-3)=q-k+2\ge2.
$$
If $y$ has a successor outside $\{w,u\}$ in $S_1\cup S_2$, then with
$V(B)\cup\{x,y\}$ it induces a strong tournament of order at least $l+1$,
yielding the desired cycle through the last vertex of $P'$.

\begin{figure}[hbt]
\begin{center}
\gpic{
\expandafter\ifx\csname graph\endcsname\relax \csname newbox\endcsname\graph\fi
\expandafter\ifx\csname graphtemp\endcsname\relax \csname newdimen\endcsname\graphtemp\fi
\setbox\graph=\vtop{\vskip 0pt\hbox{%
    \special{pn 11}%
    \special{ar 1279 787 246 115 0 6.28319}%
    \special{pn 8}%
    \special{pa 459 787}%
    \special{pa 803 787}%
    \special{fp}%
    \special{sh 1.000}%
    \special{pa 672 754}%
    \special{pa 803 787}%
    \special{pa 672 820}%
    \special{pa 672 754}%
    \special{fp}%
    \special{pa 803 787}%
    \special{pa 1033 787}%
    \special{fp}%
    \special{pn 11}%
    \special{pa 1525 787}%
    \special{pa 1770 787}%
    \special{fp}%
    \graphtemp=.5ex\advance\graphtemp by 0.787in
    \rlap{\kern 0.459in\lower\graphtemp\hbox to 0pt{\hss $\bu$\hss}}%
    \graphtemp=.5ex\advance\graphtemp by 0.706in
    \rlap{\kern 1.105in\lower\graphtemp\hbox to 0pt{\hss $\bu$\hss}}%
    \graphtemp=.5ex\advance\graphtemp by 0.787in
    \rlap{\kern 1.770in\lower\graphtemp\hbox to 0pt{\hss $\bu$\hss}}%
    \graphtemp=.5ex\advance\graphtemp by 0.131in
    \rlap{\kern 0.459in\lower\graphtemp\hbox to 0pt{\hss $\bu$\hss}}%
    \graphtemp=.5ex\advance\graphtemp by 0.131in
    \rlap{\kern 0.459in\lower\graphtemp\hbox to 0pt{\hss $\bu$\hss}}%
    \graphtemp=.5ex\advance\graphtemp by 0.131in
    \rlap{\kern 0.459in\lower\graphtemp\hbox to 0pt{\hss $\bu$\hss}}%
    \graphtemp=.5ex\advance\graphtemp by 0.295in
    \rlap{\kern 1.443in\lower\graphtemp\hbox to 0pt{\hss $\bu$\hss}}%
    \graphtemp=.5ex\advance\graphtemp by 0.131in
    \rlap{\kern 1.770in\lower\graphtemp\hbox to 0pt{\hss $\bu$\hss}}%
    \special{pn 28}%
    \special{ar 623 1347 1311 1311 -1.955193 -1.186400}%
    \special{ar 1115 -154 1148 1148 0.962551 2.179042}%
    \special{pn 8}%
    \special{pa 1105 706}%
    \special{pa 1307 459}%
    \special{fp}%
    \special{sh 1.000}%
    \special{pa 1199 540}%
    \special{pa 1307 459}%
    \special{pa 1250 581}%
    \special{pa 1199 540}%
    \special{fp}%
    \special{pa 1307 459}%
    \special{pa 1443 295}%
    \special{fp}%
    \special{pa 1443 295}%
    \special{pa 1639 197}%
    \special{fp}%
    \special{sh 1.000}%
    \special{pa 1507 226}%
    \special{pa 1639 197}%
    \special{pa 1537 285}%
    \special{pa 1507 226}%
    \special{fp}%
    \special{pa 1639 197}%
    \special{pa 1770 131}%
    \special{fp}%
    \special{pn 28}%
    \special{pa 459 131}%
    \special{pa 459 787}%
    \special{fp}%
    \special{pn 8}%
    \special{pa 459 393}%
    \special{pa 459 525}%
    \special{fp}%
    \special{sh 1.000}%
    \special{pa 492 393}%
    \special{pa 459 525}%
    \special{pa 426 393}%
    \special{pa 492 393}%
    \special{fp}%
    \special{pn 28}%
    \special{pa 1770 787}%
    \special{pa 1115 131}%
    \special{fp}%
    \special{pn 8}%
    \special{pa 1508 525}%
    \special{pa 1377 393}%
    \special{fp}%
    \special{sh 1.000}%
    \special{pa 1447 509}%
    \special{pa 1377 393}%
    \special{pa 1493 463}%
    \special{pa 1447 509}%
    \special{fp}%
    \special{pn 28}%
    \special{pa 131 131}%
    \special{pa 1115 131}%
    \special{fp}%
    \special{pn 8}%
    \special{pa 525 131}%
    \special{pa 721 131}%
    \special{fp}%
    \special{sh 1.000}%
    \special{pa 590 98}%
    \special{pa 721 131}%
    \special{pa 590 164}%
    \special{pa 590 98}%
    \special{fp}%
    \special{pn 11}%
    \special{pa 131 787}%
    \special{pa 459 787}%
    \special{fp}%
    \graphtemp=.5ex\advance\graphtemp by 0.787in
    \rlap{\kern 0.000in\lower\graphtemp\hbox to 0pt{\hss $P$\hss}}%
    \graphtemp=.5ex\advance\graphtemp by 0.131in
    \rlap{\kern 0.000in\lower\graphtemp\hbox to 0pt{\hss $C''$\hss}}%
    \graphtemp=.5ex\advance\graphtemp by 0.131in
    \rlap{\kern 1.902in\lower\graphtemp\hbox to 0pt{\hss $y$\hss}}%
    \graphtemp=.5ex\advance\graphtemp by 0.820in
    \rlap{\kern 1.393in\lower\graphtemp\hbox to 0pt{\hss $B$\hss}}%
    \special{pn 8}%
    \special{pa 951 951}%
    \special{pa 1869 951}%
    \special{pa 1869 623}%
    \special{pa 951 623}%
    \special{pa 951 951}%
    \special{da 0.066}%
    \graphtemp=.5ex\advance\graphtemp by 0.787in
    \rlap{\kern 2.000in\lower\graphtemp\hbox to 0pt{\hss $S_1$\hss}}%
    \graphtemp=.5ex\advance\graphtemp by 0.918in
    \rlap{\kern 0.459in\lower\graphtemp\hbox to 0pt{\hss $w$\hss}}%
    \graphtemp=.5ex\advance\graphtemp by 0.202in
    \rlap{\kern 1.383in\lower\graphtemp\hbox to 0pt{\hss $x$\hss}}%
    \graphtemp=.5ex\advance\graphtemp by 0.766in
    \rlap{\kern 1.198in\lower\graphtemp\hbox to 0pt{\hss $x'$\hss}}%
    \graphtemp=.5ex\advance\graphtemp by 0.885in
    \rlap{\kern 1.770in\lower\graphtemp\hbox to 0pt{\hss $u$\hss}}%
    \hbox{\vrule depth0.993in width0pt height 0pt}%
    \kern 2.000in
  }%
}%
}
\vspace{-1pc}
\caption{Case 2b of Theorem~\ref{THMA}.\label{fig2b} }
\end{center}
\end{figure}
\vspace{-1pc}

Hence $w$ and $u$ must be the only successors of $y$ in $S_1\cup S_2$.  This
forces $d^+(V'',y)=1$.  If any vertex of $V''$ has a successor in $S_2$ other
than $w$, then we obtain $\cF'$ as above using that vertex instead of $w$.
Hence $w$ must be the only successor of vertices in $V''$, so
$d^+(V'',S_2)\le q-2$.  We already computed $d^+(V^*,S_1)\le q/2$, and one of
the edges counted is $yu$.

%However, when $q=5$ we only know $y$ has at least two predecessors in $V^*$,
%and the computation only guarantees two successors of $y$ in $S_1\cap S_2$,
%which may be $w$ and $u$.  The argument still works if $y$ has any successor
%in $S_1\cap S_2$ outside $\{w,u\}$ or if any vertex of $V''$ has a successor
%in $S_2$ other than $w$.  Let $W=V''\cup\{y\}$.  Since $l<q$, we have $|S_1|=4$
%and $d^+(W,S_1)\ge d^+(V^*,S_1)\le \FL{\FR{5}{8}\cdot4}=2$, by
%Lemma~\ref{lem4}(a).  Hence $d^+(W,S_1\cup S_2)\le 6$, since $|W|=4$.
For any $\alpha\in V''$, using $k\le q$ we have
$$
d^+(\alpha,S_1\cup S_2)\ge [(q-1)k-1]-q(k-2)-d^+(\alpha,V^*)
\ge q-1-d^+(\alpha,V^*).
$$
When we sum over all $\alpha\in V''$, the last term counts all edges in
$T[V'']$, possibly $q-2$ edges from $V''$ to $x$, and one edge from $V''$ to
$y$.  Hence
$$
d^+(V'', S_1\cup S_2)\ge (q-2)(q-1)-\CH{q-2}2-(q-2)-1 =(q-2)\FR{q-1}2-1.
$$
The upper and lower bounds on $d^+(V'',S_1\cup S_2)$ require
$(q-2)(q-1)/2-1\le 3q/2-3$, but this inequality requires $q<5$.
Hence we obtain the desired improvement $\cF'$.

%Summing over all $\alpha\in W$ yields $d^+(W,S_1\cup S_2)\ge 36-4k-6\ge10$,
%since $k\le q=5$.  The contradiction implies that one of the successful
%situations must occur.

\medskip
In all cases we have improved the family $\cF$ as desired.
\end{proof}

\begin{figure}
  \centering
\end{figure}

%We used $q\ge5$ in several places, but Case 2b does not arise when $q=4$.
%The other places where $q\ge5$ is used can be extended to $q\ge4$ by more
%detailed cases.

\section{The Case of Large $k$}\label{SecB}

For the proof of Theorem~\ref{THMB}, we will use Theorem~\ref{THMA}
as a basis for induction on $k$.  The two theorems have the same
conclusion.

\begin{theorem}\label{THMB}
Given $k,q\in\NN$ with $q \geq 5$, let $T$ be a tournament with
$\delta^+(T)\ge(q-1)k-1$.  For any family ${\cF}$ of $k-1$ disjoint $q$-cycles
in $T$, there is a family of $k$ disjoint $q$-cycles in $T$ whose union has at
most $3q-6$ vertices outside the cycles in ${\cF}$.
\end{theorem}

We first discuss the basic set-up for the argument, defining notation to be
used throughout the proof.  We call the desired family an {\it extension} of
${\cF}$; finding it is {\it extending} ${\cF}$.
By Theorem \ref{THMA}, we may assume $k\ge q+1$.  In the tournament $T$,
consider a family ${\cF}$ of $k-1$ disjoint $q$-cycles in $T$.  We may assume
that the tournament $T'$ given by deleting the vertices covered by ${\cF}$
contains no cycle with length at least $q$; otherwise we have the desired
extension.

Let $P$ be a spanning path in $T'$, listed as $\la \VEC ul1\ra$.
Since $\delta^+(T)\ge(q-1)k-1$ implies $|V(T)|\ge2(q-1)k-1$, we have
$|V(T')|\ge 2(q-1)k-1-q(k-1)=(q-2)(k+1)+1$.
Since $k \geq 3$, we conclude $l=|V(T')|\ge4q-7$.

Partition $V(P)$ into $\{U_1,S,U_2\}$ by letting $U_1=\{\VEC u1{q}\}$,
$S = \{\VEC u{q+1}{4q-11}\}$, and $U_2 = V(P)-(U_1\cup S)$.  All three sets are
nonempty, with $|S|=3q-11$ and $|U_2|\ge4$.  Also, since $T'$ contains no
$q$-cycle, the edge joining $u_j$ and $u_i$ is oriented as $u_{j}u_{i}$ when
$j-i\ge q-1$.  Hence $U_2$ dominates $U_1$.

We aim to find a value $t\in\{1,2\}$ such that we can replace $t$ cycles in
${\cF}$ with $t+1$ cycles of length $q$ using at most $3q-6$ vertices of $T'$.
This will complete the proof.

Let $X$ and $Y$ be two disjoint sets of vertices in $T$.  We say that there is
an \emph{$r$-matching from $X$ to $Y$} if the set of edges with tail in $X$
and head in $Y$ contains $r$ edges with no common endpoints.  In order to
guarantee the existence of desired matchings, we will use the famous
K\"onig--Egerv\'ary Theorem (K\"onig~\cite{Kon}, Egerv\'ary~\cite{Ege}),
phrased for bipartite digraphs with all edges directed from one part to the
other.

\begin{lemma}[\cite{Ege,Kon}]\label{L1}
If there is no $r$-matching from $X$ to $Y$, then $X \cup Y$ contains a set of
at most $r-1$ vertices whose deletion eliminates all edges from $X$ to $Y$.
\end{lemma}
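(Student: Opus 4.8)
The plan is to show that the final statement (Lemma~\ref{L1}, König's theorem) is exactly the min-max duality between maximum matchings and minimum vertex covers in the bipartite graph formed by the arcs from $X$ to $Y$. So the statement to prove reduces to: in a bipartite graph $G$ on parts $X$ and $Y$, if the maximum matching has size at most $k-1$ (i.e. there is no $k$-matching), then there is a vertex cover of size at most $k-1$. This is the contrapositive/reformulation of the König--Egerváry theorem, so I would either cite it directly from \cite{J.Bang-jensen} (as the authors do) or reprove it.

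First I would set up the bipartite graph $B$ whose vertex classes are $X$ and $Y$ and whose edges are precisely the arcs of $T$ directed from $X$ to $Y$ (ignoring orientation for the matching/cover argument). A ``$k$-matching from $X$ to $Y$'' in the paper's terminology is then simply a matching of size $k$ in $B$, and a set of vertices ``which intersects all the arcs from $X$ to $Y$'' is a vertex cover of $B$. Thus the claim is: $\nu(B) \le k-1 \implies \tau(B) \le k-1$, where $\nu$ is the matching number and $\tau$ the covering number. Since König's theorem asserts $\nu(B) = \tau(B)$ for bipartite graphs, this is immediate.

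If I were to prove König's equality rather than cite it, the cleanest route is via an augmenting-path argument. I would take a maximum matching $M$ of $B$ with $|M| = \nu(B)$, and construct a vertex cover of the same size as follows: let $U$ be the set of unmatched vertices in $X$, let $Z$ be the set of all vertices reachable from $U$ by $M$-alternating paths, and take the cover to be $(X \setminus Z) \cup (Y \cap Z)$. One then checks that this set has size exactly $|M|$ (using that every matched edge contributes exactly one endpoint, and maximality of $M$ forbids augmenting paths) and that it covers every edge (an uncovered edge would either extend an alternating path or create an augmenting path, contradicting maximality). This gives $\tau(B) \le |M| = \nu(B)$, and the reverse inequality $\tau \ge \nu$ is trivial since distinct matched edges need distinct cover vertices.

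The main obstacle is essentially bookkeeping rather than conceptual: verifying that the alternating-reachability set $(X \setminus Z) \cup (Y \cap Z)$ is genuinely a cover of the correct cardinality requires a careful case analysis on where the endpoints of an arbitrary edge lie relative to $Z$, and one must be attentive that in the paper's directed setting only arcs oriented from $X$ to $Y$ count, so the bipartite reduction must be performed before invoking the undirected theorem. Given that \cite{J.Bang-jensen} is a standard digraph reference containing König's theorem, the most economical presentation is to state the reduction to the bipartite matching/cover framework in one sentence and then cite the classical result, exactly as the authors have done.
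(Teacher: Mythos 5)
Your proposal is correct: the paper offers no proof of this lemma at all, simply citing it from \cite{J.Bang-jensen} as the K\H{o}nig--Egerv\'ary duality, which is exactly the reduction (arcs from $X$ to $Y$ as a bipartite graph, $k$-matching as matching of size $k$, intersecting set as vertex cover) that you identify. Your optional augmenting-path sketch of the equality $\nu = \tau$ is the standard argument and is sound, but it goes beyond what the paper does; the citation alone matches the paper's treatment.
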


For the proof of Theorem \ref{THMB}, we need a number of additional lemmas.
The first is a standard application of the K\"onig--Egerv\'ary Theorem,
which we will apply with various values of the parameters.

\begin{lemma}\label{ClaimA}
Let $X$ and $Y$ be disjoint vertex sets in $T$, with $s=\min\{|X|,|Y|\}$
and $t=\max\{|X|,|Y|\}$.  If $d^+(X,Y)>(r-1)t$, where $1\le r\le s$, then $T$
contains an $r$-matching from $X$ to $Y$.
\end{lemma}
\begin{proof}
Since $r-1$ vertices cover at most $(r-1)t$ edges, the K\"onig--Egerv\'ary
Theorem implies that the desired matching exists.
\end{proof}

\begin{lemma}\label{ClaimB}
Let $C$ be a $q$-cycle in ${\cF}$.  If there is a vertex $v \in V(C)$ with at
least $3q-6$ successors in $U_2$, each having at least two successors in $C$,
then there is an extension of ${\cF}$.
\end{lemma}

\begin{proof}
Let $W\esub U_2$ be such a set of $3q-6$ successors of $v$.
Since $T[V(C)]$ is strong, by Moon's Theorem it has a $(q-1)$-cycle $C'$
containing $v$, omitting one vertex $u$ of $C$.  Since each vertex $w\in W$
has a successor in $C$ other than $u$, the subtournament $T[V(C')\cup\{w\}]$
is strong and has a spanning $q$-cycle (see Figure~\ref{fig54}).

Let $T'=T-V(C')$ and $\cF_0= {\cF} -\{C\}$.  Since $T'$ omits only $q-1$
vertices, $\delta^+(T')\ge (q-1)(k-1)-1$, and $\cF_0$ is a family of $k-2$
cycles of length $q$ in $T'$.  Using the induction hypothesis, we can extend
$\cF_0$ to a family $\widehat{{\cF}}$ of $k-1$ cycles of length $q$ in $T'$
using at most $3q-6$ new vertices.  Since $|W\cup\{u\}|=3q-5$, some vertex in
$W\cup\{u\}$ is not used by $\widehat{{\cF}}$.

If $u$ is not used, then adding $C$ to $\widehat{{\cF}}$ completes the
desired extension.  If $u$ is used, then at most $3q-7$ vertices not in
${\cF}$ are used in $\widehat{{\cF}}$.  In this case, some vertex $w\in W$ is
not used, and a spanning cycle in $T[V(C')\cup\{w\}]$ completes the extension
$\cF'$ using a total of at most $3q-6$ vertices not in ${\cF}$.
\end{proof}

\begin{figure}[hbt]
\begin{center}
\gpic{
\expandafter\ifx\csname graph\endcsname\relax \csname newbox\endcsname\graph\fi
\expandafter\ifx\csname graphtemp\endcsname\relax \csname newdimen\endcsname\graphtemp\fi
\setbox\graph=\vtop{\vskip 0pt\hbox{%
    \graphtemp=.5ex\advance\graphtemp by 0.738in
    \rlap{\kern 0.800in\lower\graphtemp\hbox to 0pt{\hss $\bu$\hss}}%
    \graphtemp=.5ex\advance\graphtemp by 0.738in
    \rlap{\kern 0.800in\lower\graphtemp\hbox to 0pt{\hss $\bu$\hss}}%
    \graphtemp=.5ex\advance\graphtemp by 0.738in
    \rlap{\kern 2.277in\lower\graphtemp\hbox to 0pt{\hss $\bu$\hss}}%
    \graphtemp=.5ex\advance\graphtemp by 0.123in
    \rlap{\kern 1.108in\lower\graphtemp\hbox to 0pt{\hss $\bu$\hss}}%
    \graphtemp=.5ex\advance\graphtemp by 0.123in
    \rlap{\kern 1.108in\lower\graphtemp\hbox to 0pt{\hss $\bu$\hss}}%
    \graphtemp=.5ex\advance\graphtemp by 0.123in
    \rlap{\kern 1.108in\lower\graphtemp\hbox to 0pt{\hss $\bu$\hss}}%
    \graphtemp=.5ex\advance\graphtemp by 0.123in
    \rlap{\kern 2.031in\lower\graphtemp\hbox to 0pt{\hss $\bu$\hss}}%
    \special{pn 8}%
    \special{pa 185 738}%
    \special{pa 1440 738}%
    \special{fp}%
    \special{sh 1.000}%
    \special{pa 1317 708}%
    \special{pa 1440 738}%
    \special{pa 1317 769}%
    \special{pa 1317 708}%
    \special{fp}%
    \special{pa 1440 738}%
    \special{pa 2277 738}%
    \special{fp}%
    \special{pn 28}%
    \special{ar 1262 1264 1231 1231 -1.955193 -1.186400}%
    \special{pa 800 123}%
    \special{pa 1723 123}%
    \special{fp}%
    \special{pn 8}%
    \special{pa 1169 123}%
    \special{pa 1354 123}%
    \special{fp}%
    \special{sh 1.000}%
    \special{pa 1231 92}%
    \special{pa 1354 123}%
    \special{pa 1231 154}%
    \special{pa 1231 92}%
    \special{fp}%
    \special{pn 28}%
    \special{pa 800 738}%
    \special{pa 1569 123}%
    \special{fp}%
    \special{pn 8}%
    \special{pa 1108 492}%
    \special{pa 1262 369}%
    \special{fp}%
    \special{sh 1.000}%
    \special{pa 1146 422}%
    \special{pa 1262 369}%
    \special{pa 1185 470}%
    \special{pa 1146 422}%
    \special{fp}%
    \special{pn 28}%
    \special{pa 1108 123}%
    \special{pa 800 738}%
    \special{fp}%
    \special{pn 8}%
    \special{pa 985 369}%
    \special{pa 923 492}%
    \special{fp}%
    \special{sh 1.000}%
    \special{pa 1006 396}%
    \special{pa 923 492}%
    \special{pa 951 368}%
    \special{pa 1006 396}%
    \special{fp}%
    \special{pa 800 738}%
    \special{pa 1538 369}%
    \special{fp}%
    \special{sh 1.000}%
    \special{pa 1415 397}%
    \special{pa 1538 369}%
    \special{pa 1442 452}%
    \special{pa 1415 397}%
    \special{fp}%
    \special{pa 1538 369}%
    \special{pa 2031 123}%
    \special{fp}%
    \special{pa 1754 923}%
    \special{pa 2369 923}%
    \special{pa 2369 615}%
    \special{pa 1754 615}%
    \special{pa 1754 923}%
    \special{da 0.062}%
    \special{pa 154 923}%
    \special{pa 1077 923}%
    \special{pa 1077 615}%
    \special{pa 154 615}%
    \special{pa 154 923}%
    \special{da 0.062}%
    \special{pa 1108 923}%
    \special{pa 1723 923}%
    \special{pa 1723 615}%
    \special{pa 1108 615}%
    \special{pa 1108 923}%
    \special{da 0.062}%
    \special{pa 431 892}%
    \special{pa 985 892}%
    \special{pa 985 646}%
    \special{pa 431 646}%
    \special{pa 431 892}%
    \special{da 0.062}%
    \graphtemp=.5ex\advance\graphtemp by 0.769in
    \rlap{\kern 0.000in\lower\graphtemp\hbox to 0pt{\hss $P$\hss}}%
    \graphtemp=.5ex\advance\graphtemp by 0.123in
    \rlap{\kern 0.677in\lower\graphtemp\hbox to 0pt{\hss $C'$\hss}}%
    \graphtemp=.5ex\advance\graphtemp by 1.046in
    \rlap{\kern 1.415in\lower\graphtemp\hbox to 0pt{\hss $S$\hss}}%
    \graphtemp=.5ex\advance\graphtemp by 1.046in
    \rlap{\kern 2.062in\lower\graphtemp\hbox to 0pt{\hss $U_1$\hss}}%
    \graphtemp=.5ex\advance\graphtemp by 0.795in
    \rlap{\kern 0.887in\lower\graphtemp\hbox to 0pt{\hss $w$\hss}}%
    \graphtemp=.5ex\advance\graphtemp by 0.210in
    \rlap{\kern 0.990in\lower\graphtemp\hbox to 0pt{\hss $v$\hss}}%
    \graphtemp=.5ex\advance\graphtemp by 0.123in
    \rlap{\kern 2.154in\lower\graphtemp\hbox to 0pt{\hss $u$\hss}}%
    \graphtemp=.5ex\advance\graphtemp by 0.795in
    \rlap{\kern 0.887in\lower\graphtemp\hbox to 0pt{\hss $~$\hss}}%
    \graphtemp=.5ex\advance\graphtemp by 0.831in
    \rlap{\kern 2.277in\lower\graphtemp\hbox to 0pt{\hss $u_1$\hss}}%
    \graphtemp=.5ex\advance\graphtemp by 1.046in
    \rlap{\kern 0.615in\lower\graphtemp\hbox to 0pt{\hss $U_2$\hss}}%
    \graphtemp=.5ex\advance\graphtemp by 0.856in
    \rlap{\kern 0.518in\lower\graphtemp\hbox to 0pt{\hss $W$\hss}}%
    \hbox{\vrule depth1.046in width0pt height 0pt}%
    \kern 2.400in
  }%
}%
}
\vspace{-1pc}
\caption{Figure for Lemma~\ref{ClaimB}.\label{fig54} }
\end{center}
\end{figure}
\vspace{-1pc}

The need to find an unused vertex in $W\cup\{u\}$ in the preceding proof is the
reason we limit the number of new vertices in Theorem~\ref{THMA} and in the
induction hypothesis there.

We use Lemma~\ref{ClaimB} to prove the next two lemmas.

\begin{lemma}\label{ClaimC}
Let $C$ be a $q$-cycle in ${\cF}$.  Suppose that $\cF$ has no extension.

(i) If $d^{+}(C , U_{2}) \geq 3q-6$, then $T$ contains a 2-matching from $C$ to
$U_{2}$.

(ii) If $d^{+}(C , U_{2})\geq 6q-13$, then $T$ contains a 3-matching from $C$ to
$U_{2}$.
\end{lemma}

\begin{proof}
(i) If there is no such $2$-matching, then by Lemma~\ref{L1} one vertex covers
all the edges from $V(C)$ to $U_2$.  Such a vertex $v$ can only be in $C$, and
there is no other edge from $V(C)$ to $U_2$.
Since $q\ge3$, each of the $3q-6$ successors of $v$ in $U_2$ has at least two
successors in $V(C)$.  By Lemma~\ref{ClaimB}, we have an extension of $\cF$.

(ii) If there is no such $3$-matching, then by Lemma~\ref{L1} two vertices
$u$ and $v$ cover all the edges from $V(C)$ to $U_2$, of which there are at
least $6q-13$.  If $u$ and $v$ are both in $V(C)$, then one has at least $3q-6$
successors in $U_2$, each of which has at least two successors in $C$, since
$q\ge4$.  Otherwise, name $u$ and $v$ with $u\in U_2$ and $v\in V(C)$; now $v$
has at least $5q-14$ successors in $U_2$ other than $u$.  These vertices have
no predecessors in $V(C)$ other than $v$; hence they have at least two
successors in $V(C)$.  Since $5q-14\ge 3q-6$ when $q\ge4$, in either
case Lemma~\ref{ClaimB} applies to guarantee an extension of $\cF$.
\end{proof}

\begin{lemma}\label{ClaimD}
Let $C$ be a $q$-cycle in ${\cF}$.  If $T$ contains

(i) a $q$-matching from $U_{1}$ to $V(C)$ and a $2$-matching from $V(C)$ to
$U_{2}$, or

(ii) a $(q-1)$-matching from $U_{1}$ to $V(C)$ and a $3$-matching from $V(C)$
to $U_{2}$,

then there is an extension of ${\cF}$.
\end{lemma}

\begin{proof}
We will obtain two disjoint cycles of length at least $q$ in $T[V(C)\cup V(P)]$,
where $P$ is a spanning path through the vertices outside $\cF$.  By
pancyclicity, the subtournament induced by the vertices of each such cycle
contains a $q$-cycle.  Since $T'$ induces no cycle of length at least $q$, each
of the two new cycles replacing $C$ contains at most $q-1$ new vertices.

Since $q\ge5$, we have $3q-11\ge 2q-6$.  Hence $|S|\ge2q-6$.
Since $V(P)$ induces no cycle of length at least $q$, any edge joining
two vertices of $P$ with at least $q-2$ vertices between them on $P$ is
directed from the earlier to the later vertex.

Let $M$ and $M'$ be the given matchings from $U_1$ to $V(C)$ and from
$V(C)$ to $U_2$, respectively.  We prove (i) and (ii) together.
In either case, let $u$ be the last vertex of $P$ matched into $C$
from $U_1$ by $M$; note that $u\in \{u_2,u_1\}$.  After following the edge from
$u$ to $V(C)$, let $v$ be the vertex matched into $U_2$ by $M'$ that is reached
first when continuing along $C$, and let $vw$ be this edge of $M'$.  Let $Q$ be
the path thus followed, from $u$ via $M$, along $C$, ending with $vw$.

Now choose $yz\in M'-\{vw\}$ so that $y$ is also the head of an edge in $M$
(under (ii), more than one edge of $M'$ remains, but then at most one vertex of
$C$ is not covered by $M$.)  Say that a vertex of $U_1$
{\it leads to} $y$ if it is matched by $M$ into the path along $C$ that starts
with the successor of $v$ on $C$ and ends at $y$.  If $z$ is closer to $S$ than
$w$ along $P$, then let $x$ be the highest-indexed vertex of $U_1$ (closest to
$S$) that leads to $y$.  Otherwise, let $x$ be the lowest-indexed vertex of of
$U_1$ that leads to $y$.  Let $R$ be the path leaving $x$ via $M$, then along
$C$ to $y$, ending with $yz$.
We want to form two cycles of length at least $q$ in $T[V(C)\cup V(P)]$
by adding vertices of $P$ to $Q$ or $R$.  Let $P[a,b]$ or $C[a,b]$ denote the
$a,b$-path along $P$ or $C$.

\smallskip
{\bf Case 1:} {\it $z$ is closer to $S$ than $w$ along $P$, so $x$ is
the highest-indexed vertex of $U_1$ leading to $y$} (see Figure~\ref{fig56a}).
First consider $x=u_q$.  Let one cycle be $R\cup P[z,x]$.  Since $|S|\ge2q-6$
and this cycle contains $S\cup\{x,y,z\}$, it has length at least $2q-3$, which
exceeds $q$.  Meanwhile, $P$ has at least $2q-4$ vertices between $w$ and
$u_{q-1}$.  Since $2q-4\ge q-2$, the edge joining them is oriented as
$wu_{q-1}$.  Hence $Q\cup wu_{q-1}\cup P[u_{q-1},u]$ is a cycle with at least
$q$ vertices.

When $x\ne u_q$, the edge joining $u_{2q-2}$ and $x$ has the desired
orientation, because along $P$ it skips $u_q$ and $q-3$ vertices of $S$.
Hence $R\cup P[z,u_{2q-2}]\cup u_{2q-2}x$ is a cycle.  Since $|S|\ge2q-6$,
it has at least $q-3$ vertices in $S$ plus $\{x,y,z\}$.
The other cycle is $Q\cup wu_{2q-3}\cup P[u_{2q-3},u_{q+1}]\cup u_{q+1}u$.
Since $w$ is earlier than $z$ along $P$, there are at least $q-2$ vertices
between $w$ and $u_{2q-3}$ along $P$; the same is true of $u_{q+1}$ and $u$.
Hence this is a cycle, and $\{\VEC u{2q-3}{q+1}\}\cup\{u,v,w\}$ has at least
$q$ vertices.

\begin{figure}[hbt]
\begin{center}
\gpic{
\expandafter\ifx\csname graph\endcsname\relax \csname newbox\endcsname\graph\fi
\expandafter\ifx\csname graphtemp\endcsname\relax \csname newdimen\endcsname\graphtemp\fi
\setbox\graph=\vtop{\vskip 0pt\hbox{%
    \graphtemp=.5ex\advance\graphtemp by 0.756in
    \rlap{\kern 0.650in\lower\graphtemp\hbox to 0pt{\hss $\bu$\hss}}%
    \graphtemp=.5ex\advance\graphtemp by 0.756in
    \rlap{\kern 0.933in\lower\graphtemp\hbox to 0pt{\hss $\bu$\hss}}%
    \graphtemp=.5ex\advance\graphtemp by 0.756in
    \rlap{\kern 0.933in\lower\graphtemp\hbox to 0pt{\hss $\bu$\hss}}%
    \graphtemp=.5ex\advance\graphtemp by 0.756in
    \rlap{\kern 0.933in\lower\graphtemp\hbox to 0pt{\hss $\bu$\hss}}%
    \graphtemp=.5ex\advance\graphtemp by 0.756in
    \rlap{\kern 2.063in\lower\graphtemp\hbox to 0pt{\hss $\bu$\hss}}%
    \graphtemp=.5ex\advance\graphtemp by 0.756in
    \rlap{\kern 2.346in\lower\graphtemp\hbox to 0pt{\hss $\bu$\hss}}%
    \graphtemp=.5ex\advance\graphtemp by 0.756in
    \rlap{\kern 2.911in\lower\graphtemp\hbox to 0pt{\hss $\bu$\hss}}%
    \graphtemp=.5ex\advance\graphtemp by 0.191in
    \rlap{\kern 0.650in\lower\graphtemp\hbox to 0pt{\hss $\bu$\hss}}%
    \graphtemp=.5ex\advance\graphtemp by 0.191in
    \rlap{\kern 0.933in\lower\graphtemp\hbox to 0pt{\hss $\bu$\hss}}%
    \special{pn 28}%
    \special{pa 2063 756}%
    \special{pa 933 191}%
    \special{fp}%
    \special{pn 8}%
    \special{pa 1611 530}%
    \special{pa 1385 417}%
    \special{fp}%
    \special{sh 1.000}%
    \special{pa 1473 493}%
    \special{pa 1385 417}%
    \special{pa 1499 442}%
    \special{pa 1473 493}%
    \special{fp}%
    \special{pn 28}%
    \special{pa 650 191}%
    \special{pa 650 756}%
    \special{fp}%
    \special{pn 8}%
    \special{pa 650 417}%
    \special{pa 650 530}%
    \special{fp}%
    \special{sh 1.000}%
    \special{pa 678 417}%
    \special{pa 650 530}%
    \special{pa 622 417}%
    \special{pa 678 417}%
    \special{fp}%
    \special{pn 28}%
    \special{pa 2911 756}%
    \special{pa 2346 191}%
    \special{fp}%
    \special{pn 8}%
    \special{pa 2685 530}%
    \special{pa 2572 417}%
    \special{fp}%
    \special{sh 1.000}%
    \special{pa 2632 517}%
    \special{pa 2572 417}%
    \special{pa 2672 477}%
    \special{pa 2632 517}%
    \special{fp}%
    \special{pn 28}%
    \special{pa 933 191}%
    \special{pa 933 756}%
    \special{fp}%
    \special{pn 8}%
    \special{pa 933 417}%
    \special{pa 933 530}%
    \special{fp}%
    \special{sh 1.000}%
    \special{pa 961 417}%
    \special{pa 933 530}%
    \special{pa 904 417}%
    \special{pa 961 417}%
    \special{fp}%
    \special{pn 28}%
    \special{pa 933 756}%
    \special{pa 933 756}%
    \special{fp}%
    \special{pa 933 756}%
    \special{pa 2063 756}%
    \special{fp}%
    \special{pa 2346 756}%
    \special{pa 2911 756}%
    \special{fp}%
    \special{pa 933 756}%
    \special{pa 2063 756}%
    \special{fp}%
    \special{pn 8}%
    \special{pa 650 191}%
    \special{pa 1667 191}%
    \special{fp}%
    \special{sh 1.000}%
    \special{pa 1554 163}%
    \special{pa 1667 191}%
    \special{pa 1554 219}%
    \special{pa 1554 163}%
    \special{fp}%
    \special{pa 1667 191}%
    \special{pa 2346 191}%
    \special{fp}%
    \special{pn 11}%
    \special{pa 650 756}%
    \special{pa 3052 756}%
    \special{fp}%
    \special{pn 8}%
    \special{pa 141 756}%
    \special{pa 447 756}%
    \special{fp}%
    \special{sh 1.000}%
    \special{pa 333 728}%
    \special{pa 447 756}%
    \special{pa 333 784}%
    \special{pa 333 728}%
    \special{fp}%
    \special{pa 447 756}%
    \special{pa 650 756}%
    \special{fp}%
    \special{pn 28}%
    \special{ar 1498 -546 1554 1554 0.993865 2.147728}%
    \special{ar 1498 1978 1978 1978 -2.013707 -1.127885}%
    \graphtemp=.5ex\advance\graphtemp by 0.676in
    \rlap{\kern 0.570in\lower\graphtemp\hbox to 0pt{\hss $w$\hss}}%
    \graphtemp=.5ex\advance\graphtemp by 0.676in
    \rlap{\kern 0.853in\lower\graphtemp\hbox to 0pt{\hss $z$\hss}}%
    \graphtemp=.5ex\advance\graphtemp by 0.643in
    \rlap{\kern 0.791in\lower\graphtemp\hbox to 0pt{\hss $~$\hss}}%
    \graphtemp=.5ex\advance\graphtemp by 0.643in
    \rlap{\kern 0.961in\lower\graphtemp\hbox to 0pt{\hss $~$\hss}}%
    \graphtemp=.5ex\advance\graphtemp by 0.643in
    \rlap{\kern 2.063in\lower\graphtemp\hbox to 0pt{\hss $x$\hss}}%
    \graphtemp=.5ex\advance\graphtemp by 0.643in
    \rlap{\kern 2.346in\lower\graphtemp\hbox to 0pt{\hss $u_{q-1}$\hss}}%
    \graphtemp=.5ex\advance\graphtemp by 0.676in
    \rlap{\kern 2.963in\lower\graphtemp\hbox to 0pt{\hss $u$\hss}}%
    \graphtemp=.5ex\advance\graphtemp by 0.271in
    \rlap{\kern 0.570in\lower\graphtemp\hbox to 0pt{\hss $v$\hss}}%
    \graphtemp=.5ex\advance\graphtemp by 0.271in
    \rlap{\kern 0.853in\lower\graphtemp\hbox to 0pt{\hss $y$\hss}}%
    \special{pn 8}%
    \special{pa 1978 841}%
    \special{pa 3109 841}%
    \special{pa 3109 558}%
    \special{pa 1978 558}%
    \special{pa 1978 841}%
    \special{da 0.057}%
    \special{pa 1074 841}%
    \special{pa 1922 841}%
    \special{pa 1922 558}%
    \special{pa 1074 558}%
    \special{pa 1074 841}%
    \special{da 0.057}%
    \special{pa 113 841}%
    \special{pa 1017 841}%
    \special{pa 1017 558}%
    \special{pa 113 558}%
    \special{pa 113 841}%
    \special{da 0.057}%
    \graphtemp=.5ex\advance\graphtemp by 0.954in
    \rlap{\kern 2.543in\lower\graphtemp\hbox to 0pt{\hss $U_1$\hss}}%
    \graphtemp=.5ex\advance\graphtemp by 0.954in
    \rlap{\kern 1.498in\lower\graphtemp\hbox to 0pt{\hss $S$\hss}}%
    \graphtemp=.5ex\advance\graphtemp by 0.954in
    \rlap{\kern 0.565in\lower\graphtemp\hbox to 0pt{\hss $U_2$\hss}}%
    \graphtemp=.5ex\advance\graphtemp by 0.134in
    \rlap{\kern 0.424in\lower\graphtemp\hbox to 0pt{\hss $C^*$\hss}}%
    \graphtemp=.5ex\advance\graphtemp by 0.700in
    \rlap{\kern 0.000in\lower\graphtemp\hbox to 0pt{\hss $P$\hss}}%
    \graphtemp=.5ex\advance\graphtemp by 0.756in
    \rlap{\kern 4.041in\lower\graphtemp\hbox to 0pt{\hss $\bu$\hss}}%
    \graphtemp=.5ex\advance\graphtemp by 0.756in
    \rlap{\kern 4.324in\lower\graphtemp\hbox to 0pt{\hss $\bu$\hss}}%
    \graphtemp=.5ex\advance\graphtemp by 0.756in
    \rlap{\kern 4.804in\lower\graphtemp\hbox to 0pt{\hss $\bu$\hss}}%
    \graphtemp=.5ex\advance\graphtemp by 0.756in
    \rlap{\kern 5.087in\lower\graphtemp\hbox to 0pt{\hss $\bu$\hss}}%
    \graphtemp=.5ex\advance\graphtemp by 0.756in
    \rlap{\kern 5.454in\lower\graphtemp\hbox to 0pt{\hss $\bu$\hss}}%
    \graphtemp=.5ex\advance\graphtemp by 0.756in
    \rlap{\kern 6.020in\lower\graphtemp\hbox to 0pt{\hss $\bu$\hss}}%
    \graphtemp=.5ex\advance\graphtemp by 0.756in
    \rlap{\kern 6.302in\lower\graphtemp\hbox to 0pt{\hss $\bu$\hss}}%
    \graphtemp=.5ex\advance\graphtemp by 0.191in
    \rlap{\kern 4.041in\lower\graphtemp\hbox to 0pt{\hss $\bu$\hss}}%
    \graphtemp=.5ex\advance\graphtemp by 0.191in
    \rlap{\kern 4.324in\lower\graphtemp\hbox to 0pt{\hss $\bu$\hss}}%
    \special{pn 28}%
    \special{pa 6020 756}%
    \special{pa 4324 191}%
    \special{fp}%
    \special{pn 8}%
    \special{pa 5341 530}%
    \special{pa 5002 417}%
    \special{fp}%
    \special{sh 1.000}%
    \special{pa 5100 480}%
    \special{pa 5002 417}%
    \special{pa 5118 426}%
    \special{pa 5100 480}%
    \special{fp}%
    \special{pn 28}%
    \special{pa 4041 191}%
    \special{pa 4041 756}%
    \special{fp}%
    \special{pn 8}%
    \special{pa 4041 417}%
    \special{pa 4041 530}%
    \special{fp}%
    \special{sh 1.000}%
    \special{pa 4070 417}%
    \special{pa 4041 530}%
    \special{pa 4013 417}%
    \special{pa 4070 417}%
    \special{fp}%
    \special{pn 28}%
    \special{pa 6302 756}%
    \special{pa 5737 191}%
    \special{fp}%
    \special{pn 8}%
    \special{pa 6076 530}%
    \special{pa 5963 417}%
    \special{fp}%
    \special{sh 1.000}%
    \special{pa 6023 517}%
    \special{pa 5963 417}%
    \special{pa 6063 477}%
    \special{pa 6023 517}%
    \special{fp}%
    \special{pn 28}%
    \special{pa 4324 191}%
    \special{pa 4324 756}%
    \special{fp}%
    \special{pn 8}%
    \special{pa 4324 417}%
    \special{pa 4324 530}%
    \special{fp}%
    \special{sh 1.000}%
    \special{pa 4352 417}%
    \special{pa 4324 530}%
    \special{pa 4296 417}%
    \special{pa 4352 417}%
    \special{fp}%
    \special{pn 28}%
    \special{pa 4324 756}%
    \special{pa 4804 756}%
    \special{fp}%
    \special{pa 5087 756}%
    \special{pa 5454 756}%
    \special{fp}%
    \special{pn 8}%
    \special{pa 4041 191}%
    \special{pa 5059 191}%
    \special{fp}%
    \special{sh 1.000}%
    \special{pa 4946 163}%
    \special{pa 5059 191}%
    \special{pa 4946 219}%
    \special{pa 4946 163}%
    \special{fp}%
    \special{pa 5059 191}%
    \special{pa 5737 191}%
    \special{fp}%
    \special{pn 11}%
    \special{pa 4041 756}%
    \special{pa 6443 756}%
    \special{fp}%
    \special{pn 8}%
    \special{pa 3533 756}%
    \special{pa 3838 756}%
    \special{fp}%
    \special{sh 1.000}%
    \special{pa 3725 728}%
    \special{pa 3838 756}%
    \special{pa 3725 784}%
    \special{pa 3725 728}%
    \special{fp}%
    \special{pa 3838 756}%
    \special{pa 4041 756}%
    \special{fp}%
    \special{pn 28}%
    \special{ar 4564 -116 1017 1017 1.031084 2.110508}%
    \special{ar 5412 -487 1385 1385 1.116561 2.025032}%
    \special{ar 5878 122 763 763 0.981765 2.159827}%
    \special{ar 4889 1978 1978 1978 -2.013707 -1.127885}%
    \graphtemp=.5ex\advance\graphtemp by 0.676in
    \rlap{\kern 3.961in\lower\graphtemp\hbox to 0pt{\hss $w$\hss}}%
    \graphtemp=.5ex\advance\graphtemp by 0.676in
    \rlap{\kern 4.244in\lower\graphtemp\hbox to 0pt{\hss $z$\hss}}%
    \graphtemp=.5ex\advance\graphtemp by 0.643in
    \rlap{\kern 4.663in\lower\graphtemp\hbox to 0pt{\hss $u_{2q-2}$\hss}}%
    \graphtemp=.5ex\advance\graphtemp by 0.643in
    \rlap{\kern 5.087in\lower\graphtemp\hbox to 0pt{\hss $u_{2q-3}$\hss}}%
    \graphtemp=.5ex\advance\graphtemp by 0.643in
    \rlap{\kern 5.454in\lower\graphtemp\hbox to 0pt{\hss $u_{q+1}$\hss}}%
    \graphtemp=.5ex\advance\graphtemp by 0.643in
    \rlap{\kern 6.020in\lower\graphtemp\hbox to 0pt{\hss $x$\hss}}%
    \graphtemp=.5ex\advance\graphtemp by 0.676in
    \rlap{\kern 6.354in\lower\graphtemp\hbox to 0pt{\hss $u$\hss}}%
    \graphtemp=.5ex\advance\graphtemp by 0.271in
    \rlap{\kern 3.961in\lower\graphtemp\hbox to 0pt{\hss $v$\hss}}%
    \graphtemp=.5ex\advance\graphtemp by 0.271in
    \rlap{\kern 4.244in\lower\graphtemp\hbox to 0pt{\hss $y$\hss}}%
    \special{pn 8}%
    \special{pa 5652 841}%
    \special{pa 6500 841}%
    \special{pa 6500 558}%
    \special{pa 5652 558}%
    \special{pa 5652 841}%
    \special{da 0.057}%
    \special{pa 4465 841}%
    \special{pa 5596 841}%
    \special{pa 5596 558}%
    \special{pa 4465 558}%
    \special{pa 4465 841}%
    \special{da 0.057}%
    \special{pa 3504 841}%
    \special{pa 4409 841}%
    \special{pa 4409 558}%
    \special{pa 3504 558}%
    \special{pa 3504 841}%
    \special{da 0.057}%
    \graphtemp=.5ex\advance\graphtemp by 0.954in
    \rlap{\kern 6.161in\lower\graphtemp\hbox to 0pt{\hss $U_1$\hss}}%
    \graphtemp=.5ex\advance\graphtemp by 0.954in
    \rlap{\kern 4.974in\lower\graphtemp\hbox to 0pt{\hss $S$\hss}}%
    \graphtemp=.5ex\advance\graphtemp by 0.954in
    \rlap{\kern 3.957in\lower\graphtemp\hbox to 0pt{\hss $U_2$\hss}}%
    \graphtemp=.5ex\advance\graphtemp by 0.134in
    \rlap{\kern 3.815in\lower\graphtemp\hbox to 0pt{\hss $C^*$\hss}}%
    \graphtemp=.5ex\advance\graphtemp by 0.700in
    \rlap{\kern 3.391in\lower\graphtemp\hbox to 0pt{\hss $P$\hss}}%
    \hbox{\vrule depth1.008in width0pt height 0pt}%
    \kern 6.500in
  }%
}%
}
\vspace{-1pc}
\caption{Case 1 of Lemma~\ref{ClaimD}.\label{fig56a} }
\end{center}
\end{figure}
\vspace{-1pc}

\smallskip
{\bf Case 2:} {\it $z$ is farther from $S$ than $w$ along $P$, so $x$ is the
lowest indexed vertex of $U_1$ leading to $y$.}
Let $x=u_t$, and define the paths $Q$ and $R$ as in Case 1.  Note that $t\le q$.
We want the two cycles to be $R\cup zu_{t+q-3}\cup P[u_{t+q-3},x]$
and [$Q\cup P[w,u_{t+q-2}]\cup u_{t+q-2}u_{t-1}\cup P[u_{t-1},u]$.
The jumps along $P$ must skip at least $q-2$ vertices.  This is explicit
for $u_{t+q-2}u_{t-1}$.  Since $(z,w)=(u_j,u_i)$ with $j>i>4q-11$,
and $j-(t+q-3)\ge 3q-6-t\ge 2q-6$, the other construction is also a cycle
if $2q-6\ge q-1$, which holds when $q\ge5$.
%or if $t<q$ and $q=4$.

For length at least $q$, the first cycle has $q-3$ vertices along $P$ plus at
least $\{x,y,z\}$, and the second adds to $\{u,v,w\}$ all of $S\cup U_1$ except
the $q-2$ vertices used by the first cycle and $u_1$ and maybe $u_2$.  Since
$4q-11-q\ge q-3$ when $q\ge4$, both cycles are long enough.
\end{proof}

\vspace{-1pc}
\begin{figure}[hbt]
\begin{center}
\gpic{
\expandafter\ifx\csname graph\endcsname\relax \csname newbox\endcsname\graph\fi
\expandafter\ifx\csname graphtemp\endcsname\relax \csname newdimen\endcsname\graphtemp\fi
\setbox\graph=\vtop{\vskip 0pt\hbox{%
    \graphtemp=.5ex\advance\graphtemp by 0.865in
    \rlap{\kern 0.809in\lower\graphtemp\hbox to 0pt{\hss $\bu$\hss}}%
    \graphtemp=.5ex\advance\graphtemp by 0.865in
    \rlap{\kern 1.132in\lower\graphtemp\hbox to 0pt{\hss $\bu$\hss}}%
    \graphtemp=.5ex\advance\graphtemp by 0.865in
    \rlap{\kern 1.779in\lower\graphtemp\hbox to 0pt{\hss $\bu$\hss}}%
    \graphtemp=.5ex\advance\graphtemp by 0.865in
    \rlap{\kern 2.102in\lower\graphtemp\hbox to 0pt{\hss $\bu$\hss}}%
    \graphtemp=.5ex\advance\graphtemp by 0.865in
    \rlap{\kern 2.749in\lower\graphtemp\hbox to 0pt{\hss $\bu$\hss}}%
    \graphtemp=.5ex\advance\graphtemp by 0.865in
    \rlap{\kern 3.072in\lower\graphtemp\hbox to 0pt{\hss $\bu$\hss}}%
    \graphtemp=.5ex\advance\graphtemp by 0.865in
    \rlap{\kern 3.396in\lower\graphtemp\hbox to 0pt{\hss $\bu$\hss}}%
    \graphtemp=.5ex\advance\graphtemp by 0.218in
    \rlap{\kern 0.809in\lower\graphtemp\hbox to 0pt{\hss $\bu$\hss}}%
    \graphtemp=.5ex\advance\graphtemp by 0.218in
    \rlap{\kern 1.132in\lower\graphtemp\hbox to 0pt{\hss $\bu$\hss}}%
    \special{pn 28}%
    \special{pa 2749 865}%
    \special{pa 1132 218}%
    \special{fp}%
    \special{pn 8}%
    \special{pa 2102 607}%
    \special{pa 1779 477}%
    \special{fp}%
    \special{sh 1.000}%
    \special{pa 1887 555}%
    \special{pa 1779 477}%
    \special{pa 1911 495}%
    \special{pa 1887 555}%
    \special{fp}%
    \special{pn 28}%
    \special{pa 970 542}%
    \special{pa 809 865}%
    \special{fp}%
    \special{pn 8}%
    \special{pa 906 671}%
    \special{pa 873 736}%
    \special{fp}%
    \special{sh 1.000}%
    \special{pa 960 635}%
    \special{pa 873 736}%
    \special{pa 902 606}%
    \special{pa 960 635}%
    \special{fp}%
    \special{pn 28}%
    \special{pa 3396 865}%
    \special{pa 2749 218}%
    \special{fp}%
    \special{pn 8}%
    \special{pa 3137 607}%
    \special{pa 3008 477}%
    \special{fp}%
    \special{sh 1.000}%
    \special{pa 3076 592}%
    \special{pa 3008 477}%
    \special{pa 3122 546}%
    \special{pa 3076 592}%
    \special{fp}%
    \special{pn 28}%
    \special{pa 970 542}%
    \special{pa 1132 865}%
    \special{fp}%
    \special{pn 8}%
    \special{pa 1035 671}%
    \special{pa 1067 736}%
    \special{fp}%
    \special{sh 1.000}%
    \special{pa 1038 606}%
    \special{pa 1067 736}%
    \special{pa 980 635}%
    \special{pa 1038 606}%
    \special{fp}%
    \special{pn 28}%
    \special{pa 1132 865}%
    \special{pa 1779 865}%
    \special{fp}%
    \special{pa 2102 865}%
    \special{pa 2749 865}%
    \special{fp}%
    \special{pa 3072 865}%
    \special{pa 3396 865}%
    \special{fp}%
    \special{pn 8}%
    \special{pa 809 218}%
    \special{pa 1973 218}%
    \special{fp}%
    \special{sh 1.000}%
    \special{pa 1843 186}%
    \special{pa 1973 218}%
    \special{pa 1843 251}%
    \special{pa 1843 186}%
    \special{fp}%
    \special{pa 1973 218}%
    \special{pa 2749 218}%
    \special{fp}%
    \special{pn 28}%
    \special{pa 1132 218}%
    \special{pa 970 542}%
    \special{fp}%
    \special{pa 809 218}%
    \special{pa 970 542}%
    \special{fp}%
    \special{pn 11}%
    \special{pa 809 865}%
    \special{pa 3719 865}%
    \special{fp}%
    \special{pn 8}%
    \special{pa 162 865}%
    \special{pa 550 865}%
    \special{fp}%
    \special{sh 1.000}%
    \special{pa 420 833}%
    \special{pa 550 865}%
    \special{pa 420 898}%
    \special{pa 420 833}%
    \special{fp}%
    \special{pa 550 865}%
    \special{pa 809 865}%
    \special{fp}%
    \special{pn 28}%
    \special{ar 1779 2264 2264 2264 -2.013707 -1.127885}%
    \special{ar 1455 -254 1294 1294 1.047198 2.094395}%
    \special{ar 2426 -254 1294 1294 1.047198 2.094395}%
    \graphtemp=.5ex\advance\graphtemp by 0.774in
    \rlap{\kern 0.749in\lower\graphtemp\hbox to 0pt{\hss $z$\hss}}%
    \graphtemp=.5ex\advance\graphtemp by 0.774in
    \rlap{\kern 1.191in\lower\graphtemp\hbox to 0pt{\hss $w$\hss}}%
    \graphtemp=.5ex\advance\graphtemp by 0.736in
    \rlap{\kern 1.617in\lower\graphtemp\hbox to 0pt{\hss $u_{t+q-2}$\hss}}%
    \graphtemp=.5ex\advance\graphtemp by 0.736in
    \rlap{\kern 2.134in\lower\graphtemp\hbox to 0pt{\hss $u_{t+q-3}$\hss}}%
    \graphtemp=.5ex\advance\graphtemp by 0.736in
    \rlap{\kern 2.749in\lower\graphtemp\hbox to 0pt{\hss $x$\hss}}%
    \graphtemp=.5ex\advance\graphtemp by 0.736in
    \rlap{\kern 3.072in\lower\graphtemp\hbox to 0pt{\hss $u_{t-1}$\hss}}%
    \graphtemp=.5ex\advance\graphtemp by 0.774in
    \rlap{\kern 3.455in\lower\graphtemp\hbox to 0pt{\hss $u$\hss}}%
    \graphtemp=.5ex\advance\graphtemp by 0.310in
    \rlap{\kern 0.749in\lower\graphtemp\hbox to 0pt{\hss $v$\hss}}%
    \graphtemp=.5ex\advance\graphtemp by 0.310in
    \rlap{\kern 1.159in\lower\graphtemp\hbox to 0pt{\hss $y$\hss}}%
    \special{pn 8}%
    \special{pa 2474 962}%
    \special{pa 3800 962}%
    \special{pa 3800 639}%
    \special{pa 2474 639}%
    \special{pa 2474 962}%
    \special{da 0.065}%
    \special{pa 1358 962}%
    \special{pa 2393 962}%
    \special{pa 2393 639}%
    \special{pa 1358 639}%
    \special{pa 1358 962}%
    \special{da 0.065}%
    \special{pa 129 962}%
    \special{pa 1294 962}%
    \special{pa 1294 639}%
    \special{pa 129 639}%
    \special{pa 129 962}%
    \special{da 0.065}%
    \graphtemp=.5ex\advance\graphtemp by 1.092in
    \rlap{\kern 3.137in\lower\graphtemp\hbox to 0pt{\hss $U_1$\hss}}%
    \graphtemp=.5ex\advance\graphtemp by 1.092in
    \rlap{\kern 1.876in\lower\graphtemp\hbox to 0pt{\hss $S$\hss}}%
    \graphtemp=.5ex\advance\graphtemp by 1.092in
    \rlap{\kern 0.711in\lower\graphtemp\hbox to 0pt{\hss $U_2$\hss}}%
    \graphtemp=.5ex\advance\graphtemp by 0.154in
    \rlap{\kern 0.550in\lower\graphtemp\hbox to 0pt{\hss $C^*$\hss}}%
    \graphtemp=.5ex\advance\graphtemp by 0.801in
    \rlap{\kern 0.000in\lower\graphtemp\hbox to 0pt{\hss $P$\hss}}%
    \hbox{\vrule depth1.092in width0pt height 0pt}%
    \kern 3.800in
  }%
}%
}
\vspace{-1pc}
\caption{Case 2 of Lemma~\ref{ClaimD}.\label{fig56b} }
\end{center}
\end{figure}
\vspace{-1pc}

\begin{lemma}\label{ClaimE}
Let $C$ and $C'$ be two members of ${\cF}$, with
$W=V(C)$ and $W'=V(C')$.  If $T$ contains a $q$-matching from $U_{1}$ to
$W$ and a 3-matching from $W'$ to $U_2$, and $d^{+}(W,W'))\ge q(q-1)+3$,
then there is an extension of ${\cF}$.
\end{lemma}

\begin{proof}
Again use the same notation.  We may assume that $T[V(P)]$ contains no
$q$-cycle.  We will extend ${\cF}$ by replacing $C$ and $C'$ in ${\cF}$ with
three $q$-cycles (except in one case).  We must ensure that they introduce at
most $3q-6$ new vertices.  The other members of ${\cF}$ remain.

Since $d^+(W,W')\ge q(q-1)+3$, the set $W$ has at least three vertices that
each dominate $W'$ (otherwise, $d^+(W,W')\le 2q+(q-2)(q-1)=q(q-1)+2$).  Label
these as $x_1,x_2,x_3$ so that with $w_1x_1,w_2x_2,w_3x_3$ being edges in the
$q$-matching from $U_1$ to $W$, the vertices $w_1,w_2,w_3$ occur in that order
along $P$ through $U_1$ ($w_1$ is closest to $S$).  See Figure~\ref{fig57}.

Let the edges of the given $3$-matching from $W'$ to $U_2$ be
$y_1z_1,y_2z_2,y_3z_3$, indexed so that $z_3,z_1,z_2$ occur in that order along
$P$ ($z_2$ is closest to $S$).  Since each vertex in $\{x_1,x_2,x_3\}$
dominates $W'$, we now have three disjoint paths from $U_1$ to $U_2$; the
$i$th path $R_i$ is $\la w_i,x_i,y_i,z_i\ra$.

We complete these three paths to disjoint cycles by adding vertices along the
path $P$.  Recall that $P$ contains the path $\la \VEC u{4q-11}{q+1}\ra$
through $S$ between $U_2$ and $U_1$.  Along this path of $3q-11$ vertices
define three disjoint paths, each having $q-4$ vertices (one vertex of $S$ is
not needed); call them $Q_2,Q_3,Q_1$ in order along $P$.

Let $B_i$ be the cycle formed by combining $R_i$ and $Q_i$; add the edges from
the end of each of $R_i$ and $Q_i$ to the beginning of the other, except that
between $z_2$ and $Q_2$ in $B_2$ we follow $P$ to the end of $U_2$, and between
$Q_1$ and $w_1$ in $B_1$ we follow $P$ through the beginning of $U_1$.  The
edges from $z_1$ to $Q_1$, from $z_3$ to $Q_3$, from $Q_3$ to $w_3$, and from
$Q_2$ to $w_2$, are oriented in the desired direction because they skip at
least $q-2$ vertices along $P$ and hence would complete cycles of length at
least $q$ if oriented in the other direction.

Note that $B_3$ has exactly $q$ vertices.  Possibly $B_1$ or $B_2$ has more
vertices due to picking up extras at the beginning of $U_1$ or the end of $U_2$.
However, we can shorten the cycles to length $q$ by omitting vertices at the
beginning of $Q_1$ and/or the end of $Q_2$.  This only makes the jumps along
$P$ longer, so the edges make the jumps still have the desired orientation.
The resulting cycle $B'_i$ is a $q$-cycle using exactly two vertices used in
$\cF$ ($x_i$ and $y_i$).  Hence $\{B'_1,B'_2,B_3\}$ replaces $\{C,C'\}$ to
yield an extension using $3q-6$ vertices not used by $\cF$.
\end{proof}

%Since the cycle $B_i$ has length at least $q$, among its vertices is a
%$q$-cycle $B'_i$.  Let $\cB=\{B'_1,B'_2,B'_3\}$.  We want to replace $\{C,C'\}$
%with $\cB$ to extend ${\cF}$.  Since $T[V(P)]$ contains no $q$-cycle, each
%$B'_i$ uses a vertex in $\cF$ ($x_i$ or $y_i$).  If each of them uses only one
%old vertex, then $W$ or $W'$ is visited by at most one cycle in $\cB$.  In this
%case we replace only the other cycle in $\{C,C'\}$ with the other two cycles in
%$\cB$ to extend $\cF$.  Hence we may assume that the cycles in $\cB$ use at
%least four old vertices and at most $3q-4$ new vertices.

\vspace{-1pc}
\begin{figure}[hbt]
\begin{center}
\gpic{
\expandafter\ifx\csname graph\endcsname\relax \csname newbox\endcsname\graph\fi
\expandafter\ifx\csname graphtemp\endcsname\relax \csname newdimen\endcsname\graphtemp\fi
\setbox\graph=\vtop{\vskip 0pt\hbox{%
    \special{pn 11}%
    \special{pa 295 1176}%
    \special{pa 5402 1176}%
    \special{fp}%
    \graphtemp=.5ex\advance\graphtemp by 1.176in
    \rlap{\kern 0.688in\lower\graphtemp\hbox to 0pt{\hss $\bu$\hss}}%
    \graphtemp=.5ex\advance\graphtemp by 1.176in
    \rlap{\kern 1.080in\lower\graphtemp\hbox to 0pt{\hss $\bu$\hss}}%
    \graphtemp=.5ex\advance\graphtemp by 1.176in
    \rlap{\kern 1.473in\lower\graphtemp\hbox to 0pt{\hss $\bu$\hss}}%
    \graphtemp=.5ex\advance\graphtemp by 0.272in
    \rlap{\kern 1.080in\lower\graphtemp\hbox to 0pt{\hss $\bu$\hss}}%
    \graphtemp=.5ex\advance\graphtemp by 0.390in
    \rlap{\kern 1.473in\lower\graphtemp\hbox to 0pt{\hss $\bu$\hss}}%
    \graphtemp=.5ex\advance\graphtemp by 0.508in
    \rlap{\kern 1.866in\lower\graphtemp\hbox to 0pt{\hss $\bu$\hss}}%
    \special{pn 28}%
    \special{pa 1080 272}%
    \special{pa 1080 783}%
    \special{fp}%
    \special{pn 8}%
    \special{pa 1080 476}%
    \special{pa 1080 578}%
    \special{fp}%
    \special{sh 1.000}%
    \special{pa 1120 421}%
    \special{pa 1080 578}%
    \special{pa 1041 421}%
    \special{pa 1120 421}%
    \special{fp}%
    \special{pn 28}%
    \special{pa 1866 508}%
    \special{pa 1473 1176}%
    \special{fp}%
    \special{pn 8}%
    \special{pa 1709 775}%
    \special{pa 1630 908}%
    \special{fp}%
    \special{sh 1.000}%
    \special{pa 1744 793}%
    \special{pa 1630 908}%
    \special{pa 1676 753}%
    \special{pa 1744 793}%
    \special{fp}%
    \special{pn 28}%
    \special{pa 1473 390}%
    \special{pa 688 1176}%
    \special{fp}%
    \special{pn 8}%
    \special{pa 1159 704}%
    \special{pa 1002 861}%
    \special{fp}%
    \special{sh 1.000}%
    \special{pa 1141 778}%
    \special{pa 1002 861}%
    \special{pa 1085 722}%
    \special{pa 1141 778}%
    \special{fp}%
    \special{pn 11}%
    \special{pa 1080 272}%
    \special{pa 1866 508}%
    \special{fp}%
    \special{pn 8}%
    \special{ar 1666 -251 786 786 1.313041 2.411465}%
    \special{pn 28}%
    \special{pa 1080 783}%
    \special{pa 1080 1176}%
    \special{fp}%
    \graphtemp=.5ex\advance\graphtemp by 1.293in
    \rlap{\kern 0.688in\lower\graphtemp\hbox to 0pt{\hss $z_3$\hss}}%
    \graphtemp=.5ex\advance\graphtemp by 1.293in
    \rlap{\kern 1.080in\lower\graphtemp\hbox to 0pt{\hss $z_1$\hss}}%
    \graphtemp=.5ex\advance\graphtemp by 1.293in
    \rlap{\kern 1.473in\lower\graphtemp\hbox to 0pt{\hss $z_2$\hss}}%
    \graphtemp=.5ex\advance\graphtemp by 0.154in
    \rlap{\kern 1.080in\lower\graphtemp\hbox to 0pt{\hss $y_1$\hss}}%
    \graphtemp=.5ex\advance\graphtemp by 0.272in
    \rlap{\kern 1.473in\lower\graphtemp\hbox to 0pt{\hss $y_3$\hss}}%
    \graphtemp=.5ex\advance\graphtemp by 0.390in
    \rlap{\kern 1.866in\lower\graphtemp\hbox to 0pt{\hss $y_2$\hss}}%
    \graphtemp=.5ex\advance\graphtemp by 1.176in
    \rlap{\kern 4.223in\lower\graphtemp\hbox to 0pt{\hss $\bu$\hss}}%
    \graphtemp=.5ex\advance\graphtemp by 1.176in
    \rlap{\kern 4.616in\lower\graphtemp\hbox to 0pt{\hss $\bu$\hss}}%
    \graphtemp=.5ex\advance\graphtemp by 1.176in
    \rlap{\kern 5.009in\lower\graphtemp\hbox to 0pt{\hss $\bu$\hss}}%
    \graphtemp=.5ex\advance\graphtemp by 0.508in
    \rlap{\kern 3.830in\lower\graphtemp\hbox to 0pt{\hss $\bu$\hss}}%
    \graphtemp=.5ex\advance\graphtemp by 0.390in
    \rlap{\kern 4.223in\lower\graphtemp\hbox to 0pt{\hss $\bu$\hss}}%
    \graphtemp=.5ex\advance\graphtemp by 0.272in
    \rlap{\kern 4.616in\lower\graphtemp\hbox to 0pt{\hss $\bu$\hss}}%
    \special{pa 4223 1176}%
    \special{pa 4223 390}%
    \special{fp}%
    \special{pn 8}%
    \special{pa 4223 861}%
    \special{pa 4223 704}%
    \special{fp}%
    \special{sh 1.000}%
    \special{pa 4184 861}%
    \special{pa 4223 704}%
    \special{pa 4263 861}%
    \special{pa 4184 861}%
    \special{fp}%
    \special{pn 28}%
    \special{pa 4616 1176}%
    \special{pa 4616 272}%
    \special{fp}%
    \special{pn 8}%
    \special{pa 4616 814}%
    \special{pa 4616 633}%
    \special{fp}%
    \special{sh 1.000}%
    \special{pa 4577 791}%
    \special{pa 4616 633}%
    \special{pa 4655 791}%
    \special{pa 4577 791}%
    \special{fp}%
    \special{pn 28}%
    \special{pa 5009 1176}%
    \special{pa 3830 508}%
    \special{fp}%
    \special{pn 8}%
    \special{pa 4538 908}%
    \special{pa 4302 775}%
    \special{fp}%
    \special{sh 1.000}%
    \special{pa 4419 887}%
    \special{pa 4302 775}%
    \special{pa 4458 818}%
    \special{pa 4419 887}%
    \special{fp}%
    \special{pn 11}%
    \special{pa 3830 508}%
    \special{pa 4616 272}%
    \special{fp}%
    \special{pn 8}%
    \special{ar 4031 -251 786 786 0.730127 1.828552}%
    \graphtemp=.5ex\advance\graphtemp by 1.293in
    \rlap{\kern 4.184in\lower\graphtemp\hbox to 0pt{\hss $w_1$\hss}}%
    \graphtemp=.5ex\advance\graphtemp by 1.293in
    \rlap{\kern 4.655in\lower\graphtemp\hbox to 0pt{\hss $w_2$\hss}}%
    \graphtemp=.5ex\advance\graphtemp by 1.293in
    \rlap{\kern 5.087in\lower\graphtemp\hbox to 0pt{\hss $w_3$\hss}}%
    \graphtemp=.5ex\advance\graphtemp by 0.390in
    \rlap{\kern 3.830in\lower\graphtemp\hbox to 0pt{\hss $x_3$\hss}}%
    \graphtemp=.5ex\advance\graphtemp by 0.272in
    \rlap{\kern 4.223in\lower\graphtemp\hbox to 0pt{\hss $x_1$\hss}}%
    \graphtemp=.5ex\advance\graphtemp by 0.154in
    \rlap{\kern 4.616in\lower\graphtemp\hbox to 0pt{\hss $x_2$\hss}}%
    \special{pn 28}%
    \special{ar 2517 3929 3929 3929 -1.945138 -1.121490}%
    \special{ar 3519 3633 3536 3536 -2.057275 -1.255328}%
    \special{ar 2506 3358 3143 3143 -1.905740 -1.135936}%
    \graphtemp=.5ex\advance\graphtemp by 1.176in
    \rlap{\kern 1.827in\lower\graphtemp\hbox to 0pt{\hss $\bu$\hss}}%
    \graphtemp=.5ex\advance\graphtemp by 1.176in
    \rlap{\kern 2.377in\lower\graphtemp\hbox to 0pt{\hss $\bu$\hss}}%
    \graphtemp=.5ex\advance\graphtemp by 1.176in
    \rlap{\kern 2.534in\lower\graphtemp\hbox to 0pt{\hss $\bu$\hss}}%
    \graphtemp=.5ex\advance\graphtemp by 1.176in
    \rlap{\kern 3.163in\lower\graphtemp\hbox to 0pt{\hss $\bu$\hss}}%
    \graphtemp=.5ex\advance\graphtemp by 1.176in
    \rlap{\kern 3.320in\lower\graphtemp\hbox to 0pt{\hss $\bu$\hss}}%
    \graphtemp=.5ex\advance\graphtemp by 1.176in
    \rlap{\kern 3.870in\lower\graphtemp\hbox to 0pt{\hss $\bu$\hss}}%
    \special{pa 1827 1176}%
    \special{pa 2377 1176}%
    \special{fp}%
    \special{pn 8}%
    \special{pa 2047 1176}%
    \special{pa 2157 1176}%
    \special{fp}%
    \special{sh 1.000}%
    \special{pa 2000 1136}%
    \special{pa 2157 1176}%
    \special{pa 2000 1215}%
    \special{pa 2000 1136}%
    \special{fp}%
    \special{pn 28}%
    \special{pa 2534 1176}%
    \special{pa 3163 1176}%
    \special{fp}%
    \special{pn 8}%
    \special{pa 2785 1176}%
    \special{pa 2911 1176}%
    \special{fp}%
    \special{sh 1.000}%
    \special{pa 2754 1136}%
    \special{pa 2911 1176}%
    \special{pa 2754 1215}%
    \special{pa 2754 1136}%
    \special{fp}%
    \special{pn 28}%
    \special{pa 3320 1176}%
    \special{pa 3870 1176}%
    \special{fp}%
    \special{pn 8}%
    \special{pa 3540 1176}%
    \special{pa 3650 1176}%
    \special{fp}%
    \special{sh 1.000}%
    \special{pa 3492 1136}%
    \special{pa 3650 1176}%
    \special{pa 3492 1215}%
    \special{pa 3492 1136}%
    \special{fp}%
    \special{pn 28}%
    \special{pa 1473 1176}%
    \special{pa 1827 1176}%
    \special{fp}%
    \special{pa 3870 1176}%
    \special{pa 4223 1176}%
    \special{fp}%
    \special{ar 1611 265 1296 1296 0.778281 2.363311}%
    \special{ar 4086 265 1296 1296 0.778281 2.363311}%
    \special{ar 2200 -191 1768 1768 0.884943 2.256649}%
    \special{ar 3496 -191 1768 1768 0.884943 2.256649}%
    \special{pn 8}%
    \special{pa 4046 1451}%
    \special{pa 5500 1451}%
    \special{pa 5500 1058}%
    \special{pa 4046 1058}%
    \special{pa 4046 1451}%
    \special{da 0.079}%
    \special{pa 1729 1451}%
    \special{pa 3968 1451}%
    \special{pa 3968 1058}%
    \special{pa 1729 1058}%
    \special{pa 1729 1451}%
    \special{da 0.079}%
    \special{pa 157 1451}%
    \special{pa 1611 1451}%
    \special{pa 1611 1058}%
    \special{pa 157 1058}%
    \special{pa 157 1451}%
    \special{da 0.079}%
    \graphtemp=.5ex\advance\graphtemp by 0.901in
    \rlap{\kern 5.166in\lower\graphtemp\hbox to 0pt{\hss $U_1$\hss}}%
    \graphtemp=.5ex\advance\graphtemp by 0.901in
    \rlap{\kern 2.848in\lower\graphtemp\hbox to 0pt{\hss $S$\hss}}%
    \graphtemp=.5ex\advance\graphtemp by 0.901in
    \rlap{\kern 0.491in\lower\graphtemp\hbox to 0pt{\hss $U_2$\hss}}%
    \graphtemp=.5ex\advance\graphtemp by 0.390in
    \rlap{\kern 4.852in\lower\graphtemp\hbox to 0pt{\hss $C$\hss}}%
    \graphtemp=.5ex\advance\graphtemp by 0.390in
    \rlap{\kern 0.845in\lower\graphtemp\hbox to 0pt{\hss $C'$\hss}}%
    \graphtemp=.5ex\advance\graphtemp by 1.254in
    \rlap{\kern 0.000in\lower\graphtemp\hbox to 0pt{\hss $P$\hss}}%
    \graphtemp=.5ex\advance\graphtemp by 1.333in
    \rlap{\kern 2.062in\lower\graphtemp\hbox to 0pt{\hss $Q_2$\hss}}%
    \graphtemp=.5ex\advance\graphtemp by 1.333in
    \rlap{\kern 2.809in\lower\graphtemp\hbox to 0pt{\hss $Q_3$\hss}}%
    \graphtemp=.5ex\advance\graphtemp by 1.333in
    \rlap{\kern 3.634in\lower\graphtemp\hbox to 0pt{\hss $Q_1$\hss}}%
    \hbox{\vrule depth1.575in width0pt height 0pt}%
    \kern 5.500in
  }%
}%
}
\vspace{-1pc}
\caption{Figure for Lemma~\ref{ClaimE}.\label{fig57} }
\end{center}
\end{figure}
\vspace{-1pc}

Finally we are ready to complete the main proof.

\bigskip
\noindent
{\bf Proof of Theorem~\ref{THMB}.}
We will obtain bounds on the sizes of various sets of edges under the
assumption that ${\cF}$ has no extension.  These will lead to a
contradiction.

Let ${\cL}$ consist of those $q$-cycles in ${\cF}$ receiving at
least $q(q-1)+1$ edges from $U_1$ (thereby guaranteeing a $q$-matching from
$U_1$, by Lemma~\ref{ClaimA}).
Let ${\cM}$ consist of those $q$-cycles in ${\cF}$ sending at least
$6q-13$ edges to $U_2$ (thereby guaranteeing a $3$-matching to $U_2$, by
Lemma~\ref{ClaimC}(ii)).
Let ${\cR} = {\cF} - ({\cL} \cup {\cM})$.
Let $l$, $m$, and $r$, respectively, denote the sizes
of ${\cL}, {\cM}$ and ${\cR}$.
By Lemma~\ref{ClaimD}, ${\cL} \cap {\cM} = \emptyset$.
Hence $\{{\cL},{\cM},{\cR}\}$ is a partition of ${\cF}$,
and
\begin{equation}\label{EQ1}
 l + m + r = k - 1.
\end{equation}

Now consider $d^+(U_1,S\cup U_2)$.  If this is nonzero, then let $u_{q+s}$ be
the highest-indexed (earliest) vertex of $P$ having a predecessor in $U_1$, and
let $u_{q+1-t}$ be the lowest-indexed vertex of $P$ having a successor in
$S\cup U_2$.  Since $P$ gives a path from $u_{q+s}$ to $u_{q+1-t}$, the
tournament induced by these $s+t$ vertices is strong and has a spanning cycle.
Hence $s+t< q$.  Also $d^+(U_1,S\cup U_2)\le st$.  With $s+t\le q-1$, we have
$d^+(U_1,S\cup U_2)\le (q-1)^2/4$.

Next we obtain upper and lower bounds on $d^+(U_1,{\cF})$ in order
to obtain an inequality involving $l$ and $m$.  Using the computation above,
\begin{equation}\label{EQx}
d^+(U_1,{\cF})\ge q[(q-1)k-1]-\CH q2-\FR{(q-1)^2}4.
\end{equation}

To avoid obtaining an extension of $\cF$ via Lemma~\ref{ClaimD}, each cycle in
$\cM$ must avoid a $(q-1)$-matching from $U_1$ and hence receives at most
$q(q-2)$ edges from $U_1$.  By definition, each cycle in ${\cL}$ or ${\cR}$
receives at most $q^2$ or $q(q-1)$ edges from $U_{1}$, respectively (the latter
because otherwise it would be in $\cL$).  Thus
$$d^{+}(U_{1}, {\cF}) \leq q^2l + q(q-2)m + q(q-1)r.$$
After dividing by $q$, we obtain the following inequality:
$$
(q-1)k-1-\FR{q-1}2-\FR{q-2}4-\FR 1{4q}\le ql+(q-2)m+(q-1)r.
$$
%$$(q + 1)((q - 1)k - 1) - \frac{1}{2}(q + 1)q - \frac{1}{2}(q - 2)(q - 3) \leq q(q + 1)i + (q^{2} - q - 2)o + (q^{2} - 1)r.$$
%From (\ref{EQ1}),  $r = k - 1 - i - o$. Consequently,
Since $l+m+r=k-1$, we can rewrite the right side as $(q-1)(k-1)+l-m$.  Thus
\begin{equation}\label{EQ2}
l-m\ge \FR q4 -1-\FR1{4q},
%i \geq \frac{q - 5}{q + 1} + o.
\end{equation}
and we can drop the $-1/(4q)$ term since $l$ and $m$ are integers.

Finally, we will obtain upper and lower bounds on the number of edges leaving
${\cL} \cup {\cR}$ in order to obtain an inequality that cannot be satisfied.
First, since $\delta^{+}(T)\ge(q-1)k-1$,
\begin{equation}\label{lower}
d^+({\cL}\cup {\cR}, \overline{{\cL}\cup {\cR}})\ge
q(l+r)[(q - 1)k -1] - \CH{q(l+r)}2.
\end{equation}
The absence of extensions imposes bounds on the number of edges leaving
${\cL}$.  Since every cycle in $\cL$ receives a $q$-matching from $U_1$,
Lemma~\ref{ClaimC}(i) and Lemma~\ref{ClaimD}(i) imply
$d^+({\cL},U_2)\le l(3q-7)$.  By Lemma~\ref{ClaimE},
$d^+({\cL},{\cM})\le lm[q(q-1)+2]$.  Also, $d^+(\cR,\cM)\le q^2mr$.
Each cycle in ${\cR}$ sends at most $6q-14$ edges to $|U_2|$ (otherwise it
would be placed in $\cM$), so $d^+({\cR},U_2)\le (3q-7)2r$.
Also $d^+({\cL}\cup{\cR},S)\le(3q-11)q(l+r)$, since $|S|=3q-11$.

The lower bound $d^+(U_1,{\cF})\ge q((q-1)k-1)-\CH q2-(q-1)^2/4$ is from
\eqref{EQx}.  Since every cycle in ${\cM}$ has a $3$-matching to $U_2$, by
Lemma~\ref{ClaimD} there is no $(q-1)$-matching from $U_1$ to a cycle in
${\cM}$; hence $d^+(U_1,{\cM})\le mq(q-2)$.  Thus
$$d^+(U_1,{\cL}\cup{\cR})\ge q[(q-1)k-1]-\CH q2-\FR{(q-1)^2}4-q(q-2)m.$$
Let $\alpha=\CH q2+(q-1)^2/4$.  Using \eqref{EQ1}, we conclude
\begin{align*}
d^{+}({\cL} \cup {\cR}, U_{1})&\le q^2(l+r) - q[(q-1)k-1]+q(q-2)m+\alpha\\
&=q^2(l+r)-q^2k+kq+q+q^2m-2qm+\alpha\\
&=q^2(l\!+\!r\!+\!m\!-\!k\!+\!1)-q^2+kq+q-2qm+\alpha=q(l\!+\!r\!+\!2\!-\!m\!-\!q)+\alpha.
\end{align*}

Collecting the bounds proved above (with some rearrangement), we have
\begin{align}\label{upper}
d^+&(\cL\cup\cR,\overline{\cL\cup\cR})\nonumber\\
&=\quad d^+(\cL\cup\cR,\cM)\quad +d^+(\cL\cup\cR,U_2)\quad
+d^+(\cL\cup\cR,S)\quad +d^+(\cL\cup\cR,U_1)\\
&\le
m[q^2(l\!+\!r)-l(q\!-\!2)]+(3q\!-\!7)(l\!+\!2r)+(3q\!-\!11)q(l\!+\!r)+q(l\!+\!r\!+\!2\!-\!m\!-\!q)+\alpha.\nonumber
\end{align}

Combining \eqref{lower} and \eqref{upper} and
collecting the terms involving $q(l+r)$  yields
\begin{align}\label{maineq}
q(l+r)\Big[(q-1)k-1-\FR{q(l+r)-1}2&-mq-3q+7+\FR7q\Big]\nonumber\\
&\le-ml(q-2)+(3q-7)r-q(m+q-2)+\alpha.
\end{align}

Using \eqref{EQ1}, we simplify the last factor on the left:
\begin{align}\label{factor}
(q-1)k-1-&\FR{q(l+r)-1}2-mq-3q+7+\FR7q\nonumber\\
&=(q-1)k-\FR{q(l+r+m+1)}2-\FR{mq}2-\FR{5q}2+\FR{13}2+\FR7q\nonumber\\
&=\left(\FR q2-1\right)(k-5)-\FR{mq}2+\FR32+\FR7q.
\end{align}
%Since $q(l+r)$ is positive, we can drop $5/q$ inside this factor to weaken the
%inequality \eqref{maineq}.  Also $\alpha<q(q-2)$ reduces to $2(q+1)<(q-1)^2$,
%which holds when $q\ge5$, so the right side of \eqref{maineq} is at most
%$(3q-5)r-qm-ml(q-2)$.
On the right side of \eqref{maineq}, we compute
$\alpha-q(q-2)=\FR{-(q-1)(q-3)}4+1$.
On the left side, we replace $l+r$ with $k-1-m$,
and on the right we replace $r$ with $k-1-m-l$.  The inequality is now
\begin{align*}
q(k-1-m)&\Big[\left(\FR q2-1\right)(k-5)-\FR{mq}2+\FR32 +\FR7q\Big]\\
&\le -ml(q-2)+(3q-7)(k-1-m-l)-qm-\FR{(q-1)(q-3)}4+1.
\end{align*}
The coefficients of $l$ in its only appearances are negative.  Hence for given
$q,k,m$, the inequality can hold only if it holds when $l$ takes its smallest
allowed numerical value.  By \eqref{EQ2}, we have $l\ge m-1+(q/4)$, which
yields $l\ge m+1$ when $q\ge5$ since $l\in\NN$.  Setting $l=m+1$, we now have a
quadratic inequality for $m$ in terms of $k$ and $q$:
\begin{align}\label{prequad}
q(k-1-m)&\Big[\left(\FR q2-1\right)(k-5)-\FR{mq}2+\FR32 +\FR7q\Big]\\
&\le -m(m+1)(q-2)+(3q-7)(k-2m-2)-qm-\FR{(q-1)(q-3)}4+1.\nonumber
\end{align}
%\begin{equation}\label{prequad}
%q(k\!-\!1\!-\!m)\Big[\!\left(\FR q2\!-\!1\right)\!(k\!-\!5)-\FR{mq\!+\!1}2\Big]
%\le -m\left(m\!+\!1\right)(q\!-\!2)+(3q\!-\!5)\left(k\!-\!2m\!-\!2\right) -qm.
%\end{equation}

%\begin{equation*}
%q(k-1-m)\Big[\left(\FR q2-1\right)(k-5)-\FR{mq\!+\!1}2\Big]
%\le -m\left(m-1+\FR q4\right)(q-2)+(3q-5)\left(k-2m-\FR q4\right) -qm
%%&= -m(m+\FR q4)(q-2)+(3q-1)\left(k-\FR q4\right) -6qm.\nonumber\\
%\end{equation*}
We first collect terms to write this as a quadratic inequality for $m$:
\begin{equation}\label{quad}
\left(\FR{q^2}2+q-2\right)m^2+\left[(q-q^2)k+3q^2+\FR32q-23\right]m+c\le 0,
\end{equation}
%%%\begin{equation}\label{quad}
%%%\left(\FR{q^2}2+q-2\right)m^2+\left[(q-q^2)k+\FR{13}4q^2+\FR12q\right]m+c\le 0,
%%%\end{equation}
%%%where $c=(q^2/2-q)k^2+(1+3q-3q^2)k+(13q^2-21q)/4$.
%where $c=\FR12[(q^2-2q)k^2-(6q^2-5q-10)k+(\FR{11}2q^2+q-\FR{29}2)]$.
where $c$ depends only on $k$ and $q$.

The inequality $l\ge m+1$ also yields $k-1=l+m+r\ge 2m+1$, and hence
$m\le \FL{k/2}-1$.  We thus want to show that \eqref{quad} cannot be
satisfied when $k\ge q+1\ge6$ and $0\le m\le \FL{k/2}-1$.

In order to obtain the desired contradiction, it suffices to show that the
left side of \eqref{quad} is positive at its lowest allowed point.  Since the
coefficient of the quadratic term is positive, the quadratic polynomial is
minimized where its derivative is $0$.  This occurs when
\begin{equation}\label{deriv}
(q^2+2q-4)m=(q^2-q)k-(3q^2+\FR32q-23).
\end{equation}
%\begin{equation*}
%(q^2+2q-4)m=(q^2-q)k-(13q^2+2q)/4 .
%\end{equation*}
%Dividing both sides by $q$ yields
%\begin{equation}\label{deriv}
%(q+2-4/q)m=(q-1)k-(3q+\FR32-23/q) .
%%(q+2-4/q)m=(q-1)k-(13q+2)/4 .
%%\left(1+\FR{3-4/q}{q-1}\right)m=k+\FR{13}4+\FR{15/4}{q-1}.
%\end{equation}

The analysis simplifies if the lowest value of the polynomial in \eqref{quad}
among allowed values for $m$ occurs at the highest allowed value,
$\FL{k/2}-1$.  Since the graph of a quadratic polynomial is symmetric around
the minimum, when $k$ is even this holds if the minimizing point is at least
$k/2-3/2$.  When $k$ is odd this also suffices, due to the floor function.

Thus we want the solution for $m$ in \eqref{deriv} to be at least $(k-3)/2$.
This holds unless $(q^2+2q-4)\FR{k-3}2>(q^2-q)k-(3q^2+\FR32q-23)$.  Solving for
$k$ yields
%Otherwise, $(q+2-4/q)(k-3)/2>(q-1)k-(13q+2)/4$.  Solving for $k$ yields
%\begin{equation}\label{kbound}
%k<\FR{7q-10+24/q}{2q-8+8/q}.
%\end{equation}
\begin{equation}\label{kbound}
k<\FR{3q^2-3q-34}{q^2-4q+4}.
\end{equation}
The right side of \eqref{kbound} is less than $5$ for all $q$
(since $5(q^2-4q+4)-(3q^2-3q-34)$ has no root).  Hence in the
case $k\ge q+1\ge5$ we have the desired reduction.

Hence it suffices to show that \eqref{quad} or equivalently \eqref{prequad}
cannot hold when $m=\FL{k/2}-1$.
When $k$ is even, we set $m=k/2-1$ in \eqref{prequad}.
This simplifies the expression, since now $k-2m-2=0$ and $k-1-m=k/2$.
We require
\begin{equation}\label{keven}
q\FR k2\left[\left(\FR q2\!-\!1\right)(k\!-\!5)
-\FR{(k\!-\!2)q}4\!+\!\FR32\!+\!\FR7q\right]
\le -\FR{(k\!-\!2)k}4(q-2)-q\FR{k\!-\!2}2-\FR{(q\!-\!1)(q\!-\!3)}4+1.
\end{equation}
The right side is negative and decreases as $k$ increases.  The coefficient
on $k$ in the third factor on the left is $q/4-1$, which is positive, so the
factor increases as $k$ increases.  Hence it suffices to show that the
inequality cannot hold when $k$ takes its least allowed value, $q+1$.  The
inequality then simplifies to
$$
q\FR{q+1}2\left[\left(\FR q2-1\right)(q-4)-\FR{(q-1)q}4+\FR{3}2+\FR7q\right]
\le -(q-1)\left[\FR{(q+1)(q-2)}4+\FR{q}2+\FR{(q-3)}4\right]+1.
$$
The left side increases with $q$, and the right side decreases with $q$,
%so it suffices to show that the inequality fails when $q=5$.
%Indeed, the left side is then $-15$ and the right side is $-29$.
so it suffices to show that the inequality fails when $q=4$.
The left side is then $10/4$ and the right side is $-53/4$.

When $k$ is odd, we instead set $m=(k-3)/2$.  In this case $k-2m-2=1$
and $k-1-m=(k+1)/2$, so \eqref{prequad} becomes
\begin{align}\label{qkodd}
q\FR{k+1}2&\left[\left(\FR q2-1\right)(k-5)-\FR{(k-3)q}4+\FR32+\FR 7q\right]\\
&\le -\FR{(k-3)(k-1)(q-2)}4+(3q-7)-\FR{(k-3)q}2-\FR{(q-1)(q-3)}4+1.
\nonumber
\end{align}
Again the last factor on the left increases with $k$, so again
it suffices to consider the smallest allowed value for $k$, which is $q+1$.
We require
\begin{align}\label{kodd}
q\FR{q+2}2&\left[\left(\FR q2-1\right)(q-4)-\FR{(q-2)q}4+\FR32+\FR7q\right]\\
&\le -\FR{(q-2)q(q-2)}4+(3q-7)-\FR{(q-2)q}2-\FR{(q-1)(q-3)}4+1.
\nonumber
\end{align}
Again the left side increases with $q$ and the right side decreases with $q$,
%so it suffices to show that the inequality cannot hold when $q=5$
%(thus handling the actual cases when $(q,k)$ is $(5,7)$ or $(6,7)$).  There the
%left side is actually positive, $35/8$, while the right side is $-39/4$.
so it suffices to show that the inequality cannot hold when $q=4$.
Then the left side is $15$ and the right is $-11/4$.

Thus the inequality cannot hold for any allowed values of the parameters,
and an extension must exist.
\hfill$\blacksquare$

\bigskip
Although the computation at the end of the proof works for $q=4$, other
difficulties arise when seeking this extension.  First, instead of $l\ge m+1$
we must also consider $l=m$.  Also, although most cases in the proofs of
Theorem~\ref{THMA} and Lemmas~\ref{ClaimD} and~\ref{ClaimE} extend to $q=4$
(sometimes with additional case analysis), the very special part of Case 2
in Lemma~\ref{ClaimD} when $(u,x,w,z)=(u_2,u_4,u_6,u_7)$ does not work.  This
can be fixed by changing $|S|$ from $3q-11$ to $3q-10$, but then the term $3/2$
on the left side of the numerical inequality beomes $1/2$, and the desired
contradiction fails to occur in the one special case $(q,k,l,m)=(4,5,1,1)$.
Further analysis for that case could complete the proof for $q=4$.

\section{Acknowledgments}
The authors would like to thank the	anonymous referees for their useful suggestions to simplify and improve the paper.

\end{document}